\newcommand{\field}[1]{\mathbb{#1}}
\newcommand{\N}{\field{N}}
\newcommand{\Z}{\field{Z}}
\newcommand{\R}{\field{R}}
\newcommand{\C}{\field{C}}
\numberwithin{equation}{section}
\newtheorem{theorem}{Theorem}[section]
\newtheorem{lemma}[theorem]{Lemma}
\newtheorem{corollary}[theorem]{Corollary}
\newtheorem{proposition}[theorem]{Proposition}
\theoremstyle{remark}
\newtheorem*{definition}{Definition}
\renewenvironment{proof}[1][Proof]{\begin{trivlist}
\item[\hskip \labelsep {\bfseries #1:}]}{\qed\end{trivlist}}
\title{On a generalisation of Roth's theorem for arithmetic progressions and applications to sum-free subsets}
\author{Jehanne Dousse}
\date{\today}
\begin{document}

\maketitle

%
%

\begin{abstract}
We prove a generalisation of Roth's theorem for arithmetic progressions to $d$-configurations, which are sets of the form $\{n_i+n_j+a\}_{1\leq i \leq j \leq d}$ with $a,n_1,...,n_d \in \N$, using Roth's original density increment strategy and Gowers uniformity norms. Then we use this generalisation to improve a result of Sudakov, Szemer\'edi and Vu about sum-free subsets~\cite{Sudakov} and prove that any set of $n$ integers contains a sum-free subset of size at least $\log n \left(\log ^{(3)} n \right)^{1/32772 - o(1)}$.
\end{abstract}

\section{Introduction}
In 1953 Roth~\cite{Roth} proved his famous theorem about arithmetic progressions of length~$3$.
\begin{theorem}
\label{roth}
Let $0 < \alpha < 1$. Any subset of $\{1, \dots, N \}$ of size $\alpha N$ with $N \geq N_0(\alpha)$ contains a non-trivial arithmetic progression of length $3$.
\end{theorem}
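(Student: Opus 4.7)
The plan is to follow Roth's original Fourier-analytic density-increment argument. First I would embed $A \subseteq \{1,\dots,N\}$ into $\Z/N'\Z$ for some prime $N' \in [2N,4N]$, ensuring that 3-APs in $A$ correspond, without wrap-around, to 3-APs of its image. Expressing the count $T(A)$ of 3-APs in $A$ as a Fourier-analytic trilinear form involving $\sum_\xi \widehat{1_A}(\xi)^2 \widehat{1_A}(-2\xi)$, the zero-frequency contribution is the main term of order $\alpha^3 N^2$, and the remainder is, by H\"older and Parseval, at most a constant multiple of $\alpha \cdot \max_{\xi \neq 0}|\widehat{1_A}(\xi)| \cdot N^2$.

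The argument then rests on the standard dichotomy. In the \emph{uniform} case, $\max_{\xi \neq 0}|\widehat{1_A}(\xi)| \leq c\alpha^2$ for a small absolute constant $c$; this makes the error term smaller than half of the main term, so after subtracting the at most $N$ trivial 3-APs (those with common difference $0$), many non-trivial ones remain provided $N \gg \alpha^{-3}$.

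In the complementary \emph{non-uniform} case, some $\xi_0 \neq 0$ satisfies $|\widehat{1_A}(\xi_0)| \geq c\alpha^2$. I would use a Dirichlet pigeonhole to partition $\{1,\dots,N\}$ into arithmetic progressions $P_1,\dots,P_k$ of common difference $q$ and length about $\sqrt{N}$ on each of which the character $x \mapsto e^{2\pi i \xi_0 x/N'}$ is almost constant. A standard averaging argument then extracts a progression $P_j$ on which the density of $A$ exceeds $\alpha + c'\alpha^2$ for some absolute $c' > 0$, and an affine rescaling turns this into a new instance of the problem on $\{1,\dots,N_1\}$ with $N_1 \approx \sqrt{N}$ and strictly larger density.

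Finally I would iterate. Since the density cannot exceed $1$, it can be incremented only $O(1/\alpha)$ times, and $N$ is merely square-rooted at each step, so $N \geq \exp\exp(C/\alpha)$ suffices for the iteration not to exhaust the ambient interval. At some step the uniform case must occur, producing a non-trivial 3-AP. I expect the main technical obstacle to be the density-increment step itself: one has to balance the loss in length when passing to a sub-progression against the gain in density, and Roth's choice of scale $\sqrt{N}$ in the Dirichlet partition is precisely what keeps the iteration converging.
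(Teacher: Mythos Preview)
Your proposal is correct: it is exactly Roth's classical Fourier-analytic density-increment argument, and there are no gaps in the outline. Note, however, that the paper does not actually give a standalone proof of Theorem~\ref{roth}; it is quoted as Roth's 1953 result and then recovered as the $d=2$ case of the paper's Theorem~\ref{th1}.

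Comparing your argument to the paper's proof of the $d=2$ case of Theorem~\ref{th1}: the overall architecture (embed in $\Z/N'\Z$, counting lemma, large Fourier coefficient $\Rightarrow$ density increment on a sub-progression of length $\asymp\sqrt{N}$, iterate $O(1/\alpha)$ times) is the same, and both arrive at a bound of the shape $N_0(\alpha)=\exp\exp(C/\alpha)$. The only substantive difference is in the counting step: you control the trilinear form directly by $\max_{\xi\neq 0}|\widehat{1_A}(\xi)|$ via Parseval, whereas the paper first bounds it by the Gowers $U^2$-norm of the balanced function $f_A$ using the generalized von Neumann inequality (Theorem~\ref{neumann}), and then invokes the $U^2$ inverse theorem to extract a large Fourier coefficient. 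For $3$-APs this detour through $U^2$ is redundant --- your route is shorter and more elementary --- but the paper's phrasing is what allows the argument to generalize uniformly to $d$-configurations, since those form a complexity-$1$ system controlled by $\|\cdot\|_{U^2}$ rather than by a single Fourier coefficient directly.
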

A \emph{$d$-configuration} is a set of the form $\{n_i + n_j + a\}_{1 \leq i \leq j \leq d}$ with $a, n_1, ..., n_d \in \N$. The $d$-configuration is \emph{non-trivial} if for all $i \neq j$, $n_i \neq n_j$.

Our aim is to prove the following generalisation of Roth's theorem:
\begin{theorem}
\label{th1}
Let $0 < \alpha < 1$ and $d \geq 1$. Any subset of $\{1, \dots, N \}$ of size $\alpha N$ with $N \geq N_0(\alpha,d)$ contains a non-trivial $d$-configuration.
\end{theorem}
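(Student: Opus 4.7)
The proof will go by induction on $d$, with Roth's theorem (Theorem \ref{roth}) providing the base case $d=2$: indeed a non-trivial $2$-configuration $\{2n_1+a,\, n_1+n_2+a,\, 2n_2+a\}$ is precisely a non-trivial $3$-term arithmetic progression with common difference $n_2-n_1$. For the inductive step, assume the result in a quantitative, supersaturated form for $d-1$, and let $A\subseteq\{1,\dots,N\}$ have density $\alpha$ and contain no non-trivial $d$-configuration. The plan is to run Roth's original density-increment dichotomy, but with a Gowers uniformity norm tailored to the system of linear forms $\{n_i+n_j+a\}_{1\le i\le j\le d}$.

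Concretely, transfer to $\Z/N'\Z$ in the usual way, write $f=1_A-\alpha$, and consider the counting functional
\[
T_d(A)=\sum_{a,n_1,\dots,n_d}\,\prod_{1\le i\le j\le d}1_A(n_i+n_j+a).
\]
Substituting $1_A=\alpha+f$ and expanding produces a main term $\approx\alpha^{\binom{d+1}{2}}N^{d+1}$ together with $2^{\binom{d+1}{2}}-1$ error terms of the shape $\sum\prod_{(i,j)\in S}f(n_i+n_j+a)$, indexed by non-empty $S\subseteq\{(i,j):i\le j\}$. By iterated Cauchy--Schwarz/van der Corput in the variables $n_1,\dots,n_d,a$, each such error is bounded by some Gowers norm $\|f\|_{U^{s+1}}$, where $s=s(d)$ is the Cauchy--Schwarz complexity of the $d$-configuration system. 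If $\|f\|_{U^{s+1}}$ is small, the main term dominates and, after subtracting the $O(N^d)$ trivial tuples (those with $n_i=n_j$ for some $i\neq j$), we obtain a genuine non-trivial $d$-configuration. Otherwise, a Gowers inverse theorem (or, at the level of detail Roth himself would need, a direct Plancherel argument applied after a reduction to a bounded-complexity count) converts the largeness of $\|f\|_{U^{s+1}}$ into correlation with a structured function, and a pigeonhole on a Bohr set / arithmetic subprogression then produces a progression $P$ on which $A$ has density at least $\alpha+c(\alpha,d)$. Iterating inside $P$ (rescaled) terminates in $O_\alpha(1)$ steps since density is bounded by $1$, which pins down the final bound $N_0(\alpha,d)$.

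The main obstacle, as usual in extensions of Roth, is the book-keeping of the Cauchy--Schwarz step: the system $\{n_i+n_j+a\}$ carries many non-trivial linear dependencies (e.g.\ $L_{ii}+L_{jj}=2L_{ij}$), so one cannot simply dualise a single ``free'' variable as in $3$-APs. One must carefully order the variables $a,n_1,\dots,n_d$ in which Cauchy--Schwarz is applied, keep track at each stage of the forms that remain, and thereby identify the correct index $s$ and verify that every one of the many error terms is controlled by the same norm $\|f\|_{U^{s+1}}$. Secondary technical issues --- the non-unit leading coefficient in the diagonal forms $2n_i+a$, and converting a density increment on a subprogression into a fresh instance of the problem on a shorter interval --- are standard but have to be handled to extract an explicit, albeit slowly growing, bound $N_0(\alpha,d)$.
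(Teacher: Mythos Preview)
Your overall strategy---expand the counting functional, bound error terms by a Gowers norm, and run a density-increment iteration---is exactly the paper's. However, you miss the one observation that makes the argument clean: the system $\{n_i+n_j+a\}_{1\le i\le j\le d}$ has Cauchy--Schwarz/Green--Tao complexity exactly $1$, for every $d$. To see this, for any fixed form $n_{i_0}+n_{j_0}+a$ partition the remaining forms into two classes: those involving $n_{i_0}$ and those not. The first class never touches $n_{j_0}$ alone (when $i_0\neq j_0$) or has $n_{i_0}$ paired with a distinct $n_j$ in each form (when $i_0=j_0$), so the target form is in the span of neither class. Hence, by the generalised von Neumann inequality, every one of your $2^{\binom{d+1}{2}}-1$ error terms is controlled by $\|f\|_{U^2}$, and the $U^2$ inverse theorem is just Plancherel: a large $\|f\|_{U^2}$ gives a large Fourier coefficient, and the density increment proceeds exactly as in Roth's original argument. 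There is no higher-order structure to invoke, and the ``main obstacle'' you flag---ordering the Cauchy--Schwarz steps and identifying $s$---simply does not arise.

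Two further remarks. First, the induction on $d$ you announce at the outset is never used: your sketch proceeds directly for general $d$, and the paper likewise works directly, with no inductive hypothesis on $d-1$. Second, because everything stays at the $U^2$/Fourier level, the paper obtains $N_0(\alpha,d)\le\exp\exp\bigl((C/\alpha)^{d(d+1)-1}\bigr)$, a bound with only a tower of height two; a route through a general $U^{s+1}$ inverse theorem (even if one could pin down $s$) would give something far worse.
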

If $d=2$, this is equivalent to Roth's theorem because $2$-configurations are exactly arithmetic progressions of length $3$.

Theorem~\ref{th1} can be proved easily by using Szemer\'edi's theorem~\cite{Sze75}, presented here in a quantitative version due to Gowers~\cite{Gowers}.
We write $p \uparrow q$ for $p^q$, with the convention that $p \uparrow q \uparrow r$ stands for $p \uparrow (q \uparrow r)$.

\begin{theorem}
\label{szeme}
Let $0 < \alpha < 1$, and $k$ be a positive integer. Let $N \geq 2 \uparrow 2 \uparrow (\alpha)^{-1} \uparrow 2 \uparrow 2 \uparrow (k+9)$ and let $A$ be a subset of $[N]$ with cardinality $\alpha N$. Then $A$ contains a non-trivial arithmetic progression of length $k$.
\end{theorem}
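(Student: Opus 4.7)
The plan is to follow Gowers' density-increment strategy based on the Gowers uniformity norms $U^d$. First, I would embed $[N]$ into a cyclic group $\Z/N'\Z$ of size $N' \asymp 2kN$ (to avoid wrap-around artefacts) and study the balanced function $f = 1_A - \alpha$, so that counting $k$-term arithmetic progressions in $A$ reduces to estimating the multilinear average
\[
\Lambda_k(f_0,\ldots,f_{k-1}) = \mathbb{E}_{x,r}\, f_0(x)\, f_1(x+r) \cdots f_{k-1}(x+(k-1)r).
\]
Next I would establish the generalised von Neumann inequality: if each $f_i$ is bounded in absolute value by $1$ and some $f_i$ has small $U^{k-1}$-norm, then $\Lambda_k(f_0,\ldots,f_{k-1})$ is small. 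Expanding $\Lambda_k(1_A,\ldots,1_A)$ by multilinearity around the constant $\alpha$, one deduces that if $A$ has no non-trivial $k$-AP then $\|f\|_{U^{k-1}}$ is at least a fixed positive power of $\alpha$.

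The core of the argument is then the inverse theorem for the $U^{k-1}$ norm: a large $U^{k-1}$-norm forces correlation with a ``locally polynomial'' phase of degree $k-2$ on a long sub-progression. For $k=3$ this reduces to a Fourier/Bohr-set computation; for $k\geq 4$ one must run the full Gowers machinery, iteratively applying Cauchy--Schwarz to $\|f\|_{U^{k-1}}^{2^{k-1}}$ to produce many shifts whose additive derivatives correlate with linear phases, then invoking the Balog--Szemer\'edi--Gowers theorem to extract additive structure on the set of such shifts, and finally applying a Freiman-type theorem to locate a long arithmetic progression on which $f$ correlates with a genuine polynomial phase. This is by far the main obstacle, and it is the source of the double-exponential loss in $\alpha^{-1}$ incurred at each step.

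Once the inverse theorem is in hand, a pigeonholing over level sets of the polynomial phase yields a sub-progression $P'$ of length at least $N^{c(k)/(\log 1/\alpha)^{c'(k)}}$ on which $A$ has density at least $\alpha\bigl(1+\Omega_k(\alpha^{O(1)})\bigr)$. Iterating this density increment $O_k(\alpha^{-O(1)})$ times must terminate, either because the density exceeds $1$ or because a non-trivial $k$-AP is produced directly. Chaining the losses per iteration and unwinding the recurrence gives a lower bound on $N$ of tower type; tracking constants carefully produces the shape $2\uparrow 2\uparrow \alpha^{-1}\uparrow 2\uparrow 2\uparrow (k+9)$ claimed in the statement. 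Once the inverse theorem is available, the remaining density-increment bookkeeping is essentially mechanical; it is the inverse theorem, and specifically the passage through Balog--Szemer\'edi--Gowers and Freiman, that governs both the structure and the strength of the final tower bound.
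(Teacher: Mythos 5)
This statement is not proved in the paper at all; Theorem~\ref{szeme} is quoted verbatim from Gowers' paper on Szemer\'edi's theorem and used as a black box (and indeed the whole point of the paper is to \emph{avoid} invoking it, replacing the five-fold tower $2\uparrow 2\uparrow\alpha^{-1}\uparrow 2\uparrow 2\uparrow(2d+8)$ from this route with the triple exponential of Theorem~\ref{th1_gowers}). So there is no in-paper proof for me to compare your attempt against.

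As a standalone summary of Gowers' argument, what you write is a faithful high-level road map: the embedding in a cyclic group, the balanced function, the generalised von Neumann inequality controlling $\Lambda_k$ by $\|\cdot\|_{U^{k-1}}$, the local inverse theorem for the $U^{k-1}$-norm obtained via iterated Cauchy--Schwarz, Balog--Szemer\'edi--Gowers and a Freiman-type theorem, and the density-increment iteration that produces the tower bound. One small technical slip: the balanced function should be $f=1_A-\alpha 1_{[N]}$ (not $1_A-\alpha$) after passing to the larger cyclic group, since otherwise $f$ has nonzero mean; and you should record that Freiman/Ruzsa gives a generalised arithmetic progression, from which a further pigeonhole step is needed to land on an honest one-dimensional progression. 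The more serious issue is that this is a sketch of the phases of a very long proof rather than a proof: every sentence from ``a large $U^{k-1}$-norm forces correlation\ldots'' through ``tracking constants carefully produces the shape $2\uparrow 2\uparrow\alpha^{-1}\uparrow 2\uparrow 2\uparrow(k+9)$'' compresses the entirety of Gowers' technical work, which is where all the difficulty lives. As an outline it is correct in structure, but as a proof of the stated quantitative bound it provides none of the content that actually yields the specific tower height claimed.
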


Now to prove Theorem~\ref{th1} we can locate in $A$ a progression $P$ of length $2d-1$, set the $2 n_i + a$'s to be the elements with odd indices of $P$, and notice that $P$ is a $d$-configuration. By Theorem~\ref{szeme}, this is possible if $N \geq 2 \uparrow 2 \uparrow (\alpha)^{-1} \uparrow 2 \uparrow 2 \uparrow (2d+8)$, but this bound is very unsatisfactory. Therefore we give a proof that does not involve such a deep theorem as Szemer\'edi's, and show that Theorem~\ref{th1} is true for $N \geq \exp \left(\exp \left( \left(\frac{C}{\alpha}\right)^{d(d+1)-1}\right)\right),$ where $C$ is some absolute constant.

\ 

In the second part of this paper we use Theorem~\ref{th1} to prove a result about sum-free subsets.
For two finite sets of real numbers $A$ and $B$, one says that $B$ is \emph{sum-free} with respect to $A$ if the set $\{b + b' | b,b' \in B, b \neq b' \}$ is disjoint from $A$. Let $\phi (n)$ denote the largest integer such that any set $A$ of size $n$ contains a subset of cardinality $\phi (n)$ which is sum-free with respect to A. An interesting question is to find a lower bound for $\phi(n)$. Erd\H{o}s first mentioned in~\cite{Erdos} that $\phi(n) \geq c \log n$ for some constant $c$, and the first published proof of this result was done by Choi~\cite{Choi} who proved that $\phi (n) \geq \log_2 n$. Then Ruzsa~\cite{Ruzsa} improved this result slightly by showing that $\phi (n) \geq 2\log_3 n -1$. Recently Sudakov, Szemer\'edi and Vu~\cite{Sudakov} gave the first superlogarithmic bound by showing that $\phi(n) \geq \log n \left(\log ^{(5)} n \right)^{1- o(1)}$, where $\log^{(i)}n$ denotes the iterate logarithm ($\log^{(1)}x =x$, $\log^{(i+1)}x = \log(\log^{(i)}x)$). By modifying a small part of their proof using Theorem~\ref{th1}, we prove that $\phi(n) \geq \log n \left(\log ^{(3)} n \right)^{1/32772 - o(1)}$.

\section{Proof of Theorem~\ref{th1}}
As in several proofs of Roth's theorem~\cite{Roth,Green,Heath,Sze2}, we use the density increment strategy to prove Theorem~\ref{th1}.
It consists in showing that either $A$ contains a non-trivial $d$-configuration or it has increased density on some arithmetic progression.

In the following, we will use the notations $[N]:= \{1,...,N\}$ and $e(\theta):= e^{2i\pi \theta}.$


\subsection{Complexity and Gowers uniformity norms}
Gowers' uniformity norms play an important role in his proof of Szemeredi's theorem~\cite{Gowers}, and therefore in the particular case of Roth's theorem, as explained in Green's course notes~\cite{Green}.

We will use these norms in our proof too, but we first need the notion of complexity, introduced by Green and Tao in \cite{GreenTao}.

\begin{definition}
Let $\Psi =(\psi_1,...,\psi_t)$ be a system of affine-linear forms. If $1 \leq i \leq t$ and $s \geq 0$, we say that $\Psi$ has $i$-complexity at most $s$ if one can cover the $t-1$ forms $\{\psi_j : j \in [t]\setminus \{i\} \}$ by $s+1$ classes, such that $\psi_i$ does not lie in the affine-linear span of any of these classes. The \emph{complexity} of $\Psi$ is defined to be the least $s$ for which the system has $i$-complexity at most $s$ for all $1 \leq i \leq t$, or $\infty$ if no such $s$ exists.
\end{definition}

We will now define the Gowers uniformity norms.

Let $f$ be a function from $\Z/N\Z$ to $\C$. The \emph{expectation} of $f$ is defined to be the quantity $$ \mathbb{E}_{x \in \Z/N\Z} f(x) := \frac{1}{N} \sum_{x \in \Z/N\Z} f(x).$$

\begin{definition} Suppose that $f : \Z/N\Z \rightarrow \C$ is a function. Let $k \geq 2$ be an integer. The \emph{Gowers $U^k$-norm} is defined by $$ \| f \|_{U^k} := \left(\mathbb{E}_{x,h_1,...,h_k \in \Z/N\Z} \prod_{(\omega_{1},...,\omega_k) \in \{0,1\}^k} C^{| \omega_1 | + ... + |\omega_k|} f(x + \omega_1 h_1 +...+ \omega_k h_k) \right)^{1/2^k},$$
where we use the notation $Cf := \overline{f}.$
\end{definition}

In particular the Gowers $U^2$-norm which we will use later is defined by $$ \| f \|_{U^2} := \left(\mathbb{E}_{x,h_1,h_2 \in \Z/N\Z} f(x) \overline{f(x+h_1)} \overline{f(x+h_2)} f(x+h_1+h_2) \right)^{1/4}.$$

To establish a link between complexity and Gowers uniformity norms, we will use the following theorem \cite{GreenTao}, which allows to control systems of complexity $s$ by the Gowers $U^{s+1}$-norm.

\begin{theorem}
\label{neumann}
Let $f_1,...,f_t : \Z/N\Z \rightarrow \R$ be functions such that $|f_i(x)| \leq 1$ for all $i \in [t]$ and all $x \in \Z/N\Z$. Suppose that $\Psi =(\psi_1,...,\psi_t)$ is a system of affine-linear forms of complexity $s$ consisting of $t$ forms in $d$ variables. Then
\begin{equation*}
\mid \mathbb{E}_{x_1,...,x_d \in \Z/N\Z} \prod_{i=1}^{t} f_i(\psi_i(x_1,...,x_d)) \mid \leq \min_{1 \leq i \leq t} \| f_i \| _{U^{s+1}}.
\end{equation*}
\end{theorem}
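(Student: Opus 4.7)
The plan is to prove this generalised von Neumann inequality by iterating the Cauchy--Schwarz inequality $s+1$ times, once for each class appearing in the complexity decomposition. Fix the index $i \in [t]$ achieving the minimum on the right-hand side; without loss of generality I take $i=1$. By the complexity hypothesis, I can partition $[t]\setminus\{1\} = J_0 \sqcup J_1 \sqcup \dots \sqcup J_s$ into $s+1$ classes such that $\psi_1$ lies outside the affine-linear span of $\{\psi_j : j \in J_k\}$ for each $k \in \{0,\dots,s\}$.

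The first step is to translate the combinatorial complexity hypothesis into a linear-algebraic fact: for each class $J_k$, there exists a direction vector $v_k \in \Z^d$ (and a nonzero integer $c_k$) such that every $\psi_j$ with $j \in J_k$ is invariant under the shift $x \mapsto x + v_k$, while $\psi_1(x+v_k) = \psi_1(x) + c_k$. This is precisely what it means for $\psi_1$ not to belong to the affine span of $\{\psi_j\}_{j \in J_k}$. I would set this up before any analysis.

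Next, I would run an induction on $k$. At step $k$, introduce a new shift $h_k v_k$, apply Cauchy--Schwarz in a variable orthogonal to the directions $v_0,\dots,v_{k-1}$, and square the expression. Because $\psi_j$ for $j \in J_k$ is unchanged by the shift, the two factors corresponding to $j \in J_k$ pair into $|f_j(\psi_j(x))|^2 \le 1$ and can be dropped. By contrast, $\psi_1$ and every $\psi_j$ from later classes $J_{k+1},\dots,J_s$ acquire a translation parametrised by $h_k$. Iterating this over $k=0,\dots,s$ eliminates all the forms in $J_0,\dots,J_s$, and after the final step only $f_1$ (and its complex conjugates) survives, evaluated at the $2^{s+1}$ points
\[
\psi_1(x) + \omega_0 c_0 h_0 + \omega_1 c_1 h_1 + \dots + \omega_s c_s h_s, \qquad \omega \in \{0,1\}^{s+1}.
\]
A change of variable $y = \psi_1(x)$ in the base variable and rescalings $h_k \mapsto c_k^{-1} h_k$ turn this expectation into exactly $\|f_1\|_{U^{s+1}}^{2^{s+1}}$, and taking a $2^{s+1}$-th root gives the claimed bound.

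The main obstacle is bookkeeping: one has to check that after each Cauchy--Schwarz the arguments of the surviving $f_1$ copies still form a combinatorial cube, and that all the intermediate terms involving $\psi_j$ with $j$ in an already-handled class really do cancel against their conjugates. A secondary technical difficulty is that the rescaling $h_k \mapsto c_k^{-1} h_k$ requires $c_k$ to be invertible in $\Z/N\Z$; this is the usual reason for taking $N$ prime, or (in the present context) passing from $[N]$ to $\Z/N'\Z$ for a prime $N' \asymp N$ larger than all the coefficients of the forms in $\Psi$, which only affects the densities by a harmless multiplicative constant.
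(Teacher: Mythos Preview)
The paper does not give a proof of this theorem; it is quoted from Green and Tao and used as a black box, so there is nothing in the paper to compare your argument against.

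That said, your outline is the standard generalised von Neumann argument and is essentially the proof in Green--Tao. The linear-algebraic reformulation of the complexity hypothesis (for each class $J_k$ there is a direction $v_k$ with $\dot\psi_j(v_k)=0$ for $j\in J_k$ but $\dot\psi_1(v_k)=c_k\neq 0$) is correct, and iterated Cauchy--Schwarz to strip off one class at a time is the right mechanism. One phrase is misleading: at step $k$ you do not apply Cauchy--Schwarz ``in a variable orthogonal to $v_0,\dots,v_{k-1}$''. What you actually do is a van der Corput shift in the direction $v_k$ itself (introduce a new averaging parameter $h_k$ and apply Cauchy--Schwarz in $x$); the forms that were already doubled in previous steps stay invariant under this new shift simply because invariance is a property of the linear part $\dot\psi_j$, not of any orthogonality. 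Your final caveat about needing $c_k$ invertible modulo $N$ is a genuine issue and passing to a prime modulus is the standard fix; note, though, that in the paper's own application $N'=2N+1$ is not assumed prime, so the paper is leaning on the cited reference for the full statement rather than on exactly the argument you sketch.
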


We will now show that $d$-configurations have complexity $1$, in order to be able to control averages using the Gowers $U^2$-norm.

\begin{lemma}
Let $(n_i + n_j + a)_{1 \leq i \leq j \leq d}$ be a $d$-configuration. Then it has complexity $1$.
\end{lemma}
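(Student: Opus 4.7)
The plan is to verify the complexity bound form by form: for each pair $(i_0, j_0)$ with $i_0 \leq j_0$, I will cover the remaining $t-1$ forms of the system by two classes whose affine-linear spans both miss $\psi_{i_0 j_0} = a + n_{i_0} + n_{j_0}$. Since this works for every distinguished form, the system has $i$-complexity at most $1$ for each $i$, hence complexity at most $1$.

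The partition I choose uses the variable $n_{i_0}$ as a separator: Class A collects all $\psi_{k\ell}$ with $(k,\ell) \neq (i_0, j_0)$ and $i_0 \in \{k, \ell\}$, and Class B collects those with $i_0 \notin \{k, \ell\}$. For Class B the argument is immediate, since every form there has $n_{i_0}$-coefficient $0$, so any affine-linear combination of forms in Class B has $n_{i_0}$-coefficient $0$ as well, whereas $\psi_{i_0 j_0}$ has $n_{i_0}$-coefficient at least $1$.

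For Class A there are two sub-cases. If $i_0 \neq j_0$, every form in Class A has the shape $a + n_{i_0} + n_m$ with $m \neq j_0$ (with the case $m = i_0$ accounting for $a + 2 n_{i_0}$), so the $n_{j_0}$-coefficient of any affine-linear combination is $0$, whereas it equals $1$ in $\psi_{i_0 j_0}$. If $i_0 = j_0$, every form in Class A has the shape $a + n_{i_0} + n_m$ with $m \neq i_0$, and an affine-linear combination $\sum_m c_m (a + n_{i_0} + n_m)$ with $\sum_m c_m = 1$ has $n_{i_0}$-coefficient equal to $\sum_m c_m = 1$, not $2$. In either sub-case $\psi_{i_0 j_0}$ escapes the affine-linear span of Class A.

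I do not expect a genuine obstacle: the whole argument reduces to the coefficient bookkeeping above. The only subtlety worth flagging is that the diagonal case $i_0 = j_0$ is handled differently from the off-diagonal one, by noting that the $n_{i_0}$-coefficient produced is off by exactly one rather than by the absence of a ``third'' coordinate to latch onto.
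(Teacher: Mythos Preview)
Your partition and coefficient-chasing are exactly the paper's argument: split the remaining forms according to whether they involve $n_{i_0}$, and rule out the span of each class by looking at the coefficient of $n_{j_0}$ (respectively $n_{i_0}$). Two small points, though.

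In the diagonal case you impose $\sum_m c_m = 1$, apparently reading ``affine-linear span'' as ``affine combinations''. In the Green--Tao sense the affine-linear span of $\psi_1,\dots,\psi_k$ is $\{c_0 + \sum_j c_j \psi_j : c_j \text{ scalars}\}$, with no constraint on $\sum_j c_j$. The repair is immediate: comparing the $n_m$-coefficient for each $m \neq i_0$ forces every $c_m = 0$, whence the $n_{i_0}$-coefficient of the combination is $0 \neq 2$. (This is what the paper means by ``we cannot cancel them to obtain $2n_{i_0}+a$''.)

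You also stop at complexity $\leq 1$, whereas the lemma asserts complexity exactly $1$. The paper dispatches the lower bound in one line: since $2(n_{i_0}+n_{j_0}+a) = (2n_{i_0}+a) + (2n_{j_0}+a)$, the form $n_{i_0}+n_{j_0}+a$ lies in the affine-linear span of the remaining forms taken as a single class, so the system does not have complexity $0$.
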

\begin{proof}
Let $i_0, j_0 \in [d]$ such that $i_0 < j_0$. We take the first class to consist of all forms involving $n_{i_0}$, and the second one to consist of all other forms. Then the form $n_{i_0} + n_{j_0}+ a$ is in the linear span of neither of those classes, because the first one does not involve $n_{j_0}$ at all, and the second one does not involve $n_{i_0}$.

Let us now consider the form $2 n_{i_0} +a$. As in the previous case, we take the first class to consist of all forms involving $n_{i_0}$, and the second one to consist of all other forms. Clearly $2 n_{i_0} +a$ is not in the linear span of the second class because it does not involve $n_{i_0}$ at all. And it is also not in the linear span of the first class, because in each of its forms, $n_{i_0}$ appears with a different $n_j$, and we cannot cancel them to obtain $2 n_{i_0} +a$.

Therefore the system has $(i_0,j_0)$-complexity $1$ for all $1 \leq i_0 \leq j_0 \leq d$, so the $d$-configuration has complexity at most $1$.
It does not have complexity $0$ because if we consider the form $n_{i_0} + n_{j_0}+ a$ and put all other forms in the same class, it is in its linear span. Indeed for example $2(n_{i_0}+n_{j_0}+a)=(2n_{i_0}+a)+(2n_{j_0}+a)$.
So the $d$-configuration has complexity $1$.
\end{proof}

As a particular case of Theorem~\ref{neumann}, using the fact that $d$-configurations have complexity $1$, we have the following theorem:

\begin{theorem}
\label{config}
Let $f_{ij}: \Z/N\Z \rightarrow \R$ be functions such that $|f_{ij}(x)| \leq 1$ for all $1 \leq i \leq j \leq d$, and $a$ be an integer. Then
\begin{equation*}
\mid \mathbb{E}_{n_1,...,n_d \in \Z/N\Z} \prod_{1 \leq i \leq j \leq d} f_{ij}(n_i + n_j +a) \mid \leq \min_{1 \leq i \leq j \leq d} \| f_{ij} \| _{U^{2}}.
\end{equation*}
\end{theorem}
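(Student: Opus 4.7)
The statement is set up to be an immediate consequence of Theorem~\ref{neumann} combined with the complexity computation just carried out in the previous lemma. My plan is therefore to simply package these two ingredients; there is no fresh idea needed here.

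First, I would identify the system of affine-linear forms: set $t = d(d+1)/2$ and let $\Psi = (\psi_{ij})_{1 \leq i \leq j \leq d}$ with $\psi_{ij}(n_1, \dots, n_d) = n_i + n_j + a$. This is a system of $t$ affine-linear forms in the $d$ variables $n_1, \dots, n_d$, the integer $a$ playing the role of the affine (constant) part. The boundedness hypothesis $|f_{ij}(x)| \leq 1$ needed to apply Theorem~\ref{neumann} is given by assumption.

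Next, by the previous lemma we know that $\Psi$ has complexity exactly $1$. Applying Theorem~\ref{neumann} with $s = 1$ therefore yields
$$\left| \mathbb{E}_{n_1, \dots, n_d \in \Z/N\Z} \prod_{1 \leq i \leq j \leq d} f_{ij}(n_i + n_j + a) \right| \leq \min_{1 \leq i \leq j \leq d} \| f_{ij} \|_{U^{s+1}} = \min_{1 \leq i \leq j \leq d} \| f_{ij} \|_{U^2},$$
which is exactly the claim.

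There is no real obstacle: all the substance lies in Theorem~\ref{neumann}, imported as a black box from \cite{GreenTao}, and in the complexity computation of the previous lemma. The only minor bookkeeping issue is that Theorem~\ref{neumann} indexes its $t$ forms linearly by a single index $i \in [t]$, whereas it is more natural here to index by ordered pairs $(i,j)$ with $1 \leq i \leq j \leq d$; this relabelling is purely cosmetic and has no effect on either the hypothesis or the conclusion.
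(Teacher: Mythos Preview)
Your proposal is correct and matches the paper's own treatment exactly: the paper simply states Theorem~\ref{config} as a particular case of Theorem~\ref{neumann}, using the fact established in the preceding lemma that $d$-configurations have complexity $1$. There is nothing to add.
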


\subsection{Obtaining a large Gowers $U^2$-norm}
In this subsection, we prove that either $A$ contains non-trivial $d$-configurations or a particular function $f_A$ has a large Gowers $U^2$-norm.

Let $P$ be an arithmetic progression of length $N$, and $A \subseteq P$ a set of size $\alpha N$.
By linear rescaling, we may assume that $P= [N]$, because it does not change either the density $\alpha$ or the number of $d$-configurations in $A$. Indeed let us assume that the forms $(n_i + n_j + a)_{1 \leq i \leq j \leq d}$ are located in the progression $P$. Adding some constant to each element of $P$, and changing $a$ accordingly, we may assume that $P$ is equal to $\{k,2k,...,Nk\}$ for some integer $k$ and that for every $1 \leq i \leq j \leq d$, $n_i + n_j + a = y_{i,j}k$. By linearly rescaling $P$ to $[N]$, our $d$-configuration becomes $(y_{i,j})_{1 \leq i \leq j \leq d}$. Let us show that this is still a $d$-configuration. We have for every $1 \leq i \leq d-1$, $n_{i+1} - n_i = (y_{i,i+1} - y_{i,i})k$, which means that the $n_i$'s are also inside an arithmetic progression with common difference $k$. Let us write $n_i = \beta_i k +b$ where $\beta_i \in \N$ for all $1 \leq i \leq d$. Then we have $n_i + n_j + a = \beta_i k + \beta_j k + 2b + a = y_{i,j}k$, so if we set $a':= \frac{2b+a}{k}$, we have $\beta_i + \beta_j + a'=y_{i,j}$ for all $1 \leq i \leq j \leq d$, which means that we still have a $d$-configuration.

Conversely if $(m_i+m_j+a)_{1 \leq i \leq j \leq d}$ is a $d$-configuration in $[N]$, then the linear system $(k(m_i + m_j +a) + b)_{1 \leq i \leq j \leq d} =(k m_i + k m_j + (ak + b))_{1 \leq i \leq j \leq d}$ is a $d$-configuration located in the progression $\{k+b,...,Nk+b\}$.

\ \\
Let us set $N':= 2N +1$ and let $\tilde{A}$ denote $A$ considered as a subset of $\Z / N'\Z$. The number of $d$-configurations in $\tilde{A}$ is the same as that in $A$, so we will identify $\tilde{A}$ and $A$.
Let us also note that to count once and only once each $d$-configuration $(n_i + n_j + a)_{1 \leq i \leq j \leq d}$ we shall assume that $a$ is equal either to $0$ or $1$.

Given functions $(f_{ij})_{1 \leq i \leq j \leq d}$, set
\begin{equation*}
\Pi_d ((f_{ij})_{1 \leq i \leq j \leq d}) := \mathbb{E}_{n_1,...,n_d \in \Z / N'\Z} \prod_{1 \leq i \leq j \leq d} f_{ij}(n_i +n_j) + \mathbb{E}_{n_1,...,n_d \in \Z / N'\Z} \prod_{1 \leq i \leq j \leq d} f_{ij}(n_i +n_j+1).
\end{equation*}

The quantity $\Pi_d(1_A,...,1_A)$, where $1_A$ is the characteristic function of $A$ ($1_A(x)=1$ if $x \in A$, $0$ otherwise), is equal to $1/N'^d$ times the number of $d$-configurations in $A$, including the trivial ones.
We shall compare this with $\Pi_d(\alpha 1_{[N]},...,\alpha 1_{[N]})$, where $\alpha 1_{[N]}(x)$ is defined to be $\alpha$ if $x \in [N]$ and $0$ if $x \in \Z / N'\Z \setminus [N]$.

To compute the difference between the two we introduce the \emph{balanced function} of $A$, defined by $$f_A:= 1_A - \alpha 1_{[N]}.$$

Let us note that the expectation of $f_A$ is equal to $0$. This property will be useful later in the proof.

Since $\Pi_d$ is multilinear, we may expand $\Pi_d(1_A,...,1_A)$ as a main term $\Pi_d(\alpha 1_{[N]},...,\alpha 1_{[N]}) = \alpha^{d(d+1)/2} \Pi_d(1_{[N]},...,1_{[N]})$ plus $2^{d(d+1)/2}-1$ other terms $\Pi_d ((g_{ij})_{1 \leq i \leq j \leq d})$ where at least one the $g_{ij}$'s is equal to $f_A$.

\begin{lemma}
\label{lemma1}
Suppose that $N > 16^d \alpha^{-\frac{d(d+1)}{2}}$ and that $A$ contains fewer than $\frac{\alpha^{d(d+1)/2} N^d}{2^{d-1} + 1}$ non-trivial $d$-configurations. Then there are $1$-bounded functions $(g_{ij})_{1 \leq i \leq j \leq d}$, at least one of which being equal to $f_A$, such that $$\mid \Pi_d((g_{ij})_{1 \leq i \leq j \leq d}) \mid \geq \left(\frac{\alpha}{C}\right)^{\frac{d(d+1)}{2}}$$ for some absolute constant $C>0$, where $\Pi_d((g_{ij})_{1 \leq i \leq j \leq d})$ is one of the $2^{d(d+1)/2}-1$ other terms.
\end{lemma}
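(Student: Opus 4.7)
The natural strategy is density-increment: expand $1_A=\alpha 1_{[N]}+f_A$ inside $\Pi_d(1_A,\dots,1_A)$ by multilinearity, producing the main term $\alpha^{d(d+1)/2}\Pi_d(1_{[N]},\dots,1_{[N]})$ and the $2^{d(d+1)/2}-1$ other terms $\Pi_d((g_{ij}))$, each featuring at least one $g_{ij}=f_A$. The plan is then to bound $\Pi_d(1_A,\dots,1_A)$ from above using the hypothesis on non-trivial $d$-configurations, bound the main term from below by a direct counting argument, and conclude by pigeonhole that at least one of the other terms has the required magnitude.

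For the upper bound, $N'^d\Pi_d(1_A,\dots,1_A)$ counts the tuples $(a,n_1,\dots,n_d)$ with $a\in\{0,1\}$ and $n_i+n_j+a\in A$ for every $i\leq j$. Splitting these into non-trivial tuples (all $n_i$ distinct) and trivial tuples (some $n_i=n_{i'}$), the hypothesis bounds the former by $\alpha^{d(d+1)/2}N^d/(2^{d-1}+1)$, while the latter number at most $2\binom{d}{2}N'^{d-1}$, an error that is absorbed thanks to $N>16^d\alpha^{-d(d+1)/2}$. Dividing by $N'^d$ and using $N'>2N$ gives
$$\Pi_d(1_A,\dots,1_A)\leq\frac{\alpha^{d(d+1)/2}}{2^d(2^{d-1}+1)}+\varepsilon,$$
with $\varepsilon$ much smaller than the gap computed below.

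For the lower bound on the main term, whenever $(n_1,\dots,n_d)\in\{1,\dots,\lfloor N/2\rfloor\}^d$ the condition $n_i+n_j\in[N]$ is automatic, and a symmetric count works for the $a=1$ summand, yielding $\Pi_d(1_{[N]},\dots,1_{[N]})\geq 2\lfloor N/2\rfloor^d/N'^d$. The comparison of the two estimates boils down to the inequality $2/2^d>1/(2^{d-1}+1)$, which is equivalent to $2^{d-1}+1>2^{d-1}$ and hence trivially true, so the main term exceeds $\Pi_d(1_A,\dots,1_A)$ by a positive multiple of $\alpha^{d(d+1)/2}\cdot 2^{-3d}$. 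The $2^{d(d+1)/2}-1$ other terms therefore have absolute sum at least this much, and pigeonhole produces one term of magnitude at least $\alpha^{d(d+1)/2}\cdot 2^{-d(d+1)/2-3d+O(1)}$. An elementary computation then shows this is at least $(\alpha/C)^{d(d+1)/2}$ for any sufficiently large absolute constant $C$, for instance $C=8$.

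The main obstacle is that the upper bound on $\Pi_d(1_A,\dots,1_A)$ and the lower bound on the main term are both of order $\alpha^{d(d+1)/2}/4^d$, so the gap between them could a priori collapse; it remains strictly positive precisely because the hypothesis contains the coefficient $1/(2^{d-1}+1)$ rather than $1/2^{d-1}$. Beyond that, the work is routine bookkeeping: checking that the $O(d^2N^{d-1})$ trivial count and the rounding in $N/N'$ are indeed absorbed thanks to $N>16^d\alpha^{-d(d+1)/2}$.
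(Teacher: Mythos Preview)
Your proposal is correct and follows essentially the same route as the paper: multilinear expansion of $1_A=\alpha 1_{[N]}+f_A$, lower bound on the main term via tuples in $\{1,\dots,\lfloor N/2\rfloor\}^d$, upper bound on $\Pi_d(1_A,\dots,1_A)$ via the hypothesis plus a crude trivial-tuple count, and the key comparison $1/2^{d-1}>1/(2^{d-1}+1)$ followed by pigeonhole. The only cosmetic differences are that the paper uses the sharper trivial bound $d(d-1)(\alpha N)^{d-1}$ (exploiting $2n_i\in A$) rather than your $2\binom{d}{2}N'^{\,d-1}$, and that your exposition briefly substitutes $N'>2N$ into the upper bound before reverting to the common-factor comparison of coefficients; neither affects the argument.
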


\begin{proof}
We have
\begin{equation*}
\alpha^{\frac{d(d+1)}{2}} \Pi_d(1_{[N]},...,1_{[N]}) \geq \alpha^{\frac{d(d+1)}{2}} \times \frac{1}{N'^d} \times 2\left(\frac{N}{2}\right)^d,
\end{equation*}
because if we choose $1 \leq n_i \leq N/2$ for all $1 \leq i \leq d$, then for all $1 \leq i \leq j \leq d$, $1 \leq n_i + n_j \leq N$, giving at least $\left( \frac{N}{2} \right)^d$ $d$-configurations of the form $(n_i+n_j)_{1 \leq i \leq j \leq d}$, and if we choose $0 \leq n_i \leq N/2 -1$ for all $1 \leq i \leq d$, then for all $1 \leq i \leq j \leq d$, $1 \leq n_i + n_j +1 \leq N$, giving at least $\left( \frac{N}{2} \right)^d$ $d$-configurations of the form $(n_i+n_j+1)_{1 \leq i \leq j \leq d}$. Therefore we have at least $2\left(\frac{N}{2}\right)^d$ $d$-configurations in total (including trivial ones).

Recall that a $d$-configuration $(n_i + n_j)_{1 \leq i \leq j \leq d}$ is trivial if at least two of the $n_i$'s are equal. Let us find an upper bound for the number of trivial $d$-configurations of the form $(n_i+n_j)_{1 \leq i \leq j \leq d}$ in $A$. First, we have $d \choose 2$ ways to choose $i_0$ and $j_0$ for which we set $n_{i_0}=n_{j_0}$. We know that $2n_i$ must be in $A$ for all $i$, so there are at most $|A|=\alpha N$ choices for $n_{i_0}$ (we have an equality here if $A$ only contains even integers). Then $n_{j_0}$ is forced to be equal to $n_{i_0}$, and we have at most $|A|=\alpha N$ choices for each one the $d-2$ other $n_i$'s too, giving at most ${d \choose 2}(\alpha N)^{d-1}$ trivial $d$-configurations of the form $(n_i+n_j)_{1 \leq i \leq j \leq d}$ in $A$. Note that we may have counted some $d$-configurations that are not fully contained in $A$, but we only want an upper bound so this is not a problem here.
The $d$-configurations of the form $(n_i+n_j+1)_{1 \leq i \leq j \leq d}$ work exactly in the same way, and in total we have at most $2 {d \choose 2}(\alpha N)^{d-1} = d(d-1) (\alpha N)^{d-1}$ trivial $d$-configurations in $A$. Therefore, using the fact that $A$ contains fewer than $\frac{\alpha^{d(d+1)/2} N^d}{2^{d-1} + 1}$ non-trivial $d$-configurations, we obtain
\begin{equation*}
\Pi_d(1_A,...,1_A) \leq \frac{\alpha^{\frac{d(d+1)}{2}}}{2^{d-1} + 1} \left(\frac{N}{N'}\right)^d + d(d-1)\frac{(\alpha N)^{d-1}}{N'^d}.
\end{equation*}

After some calculation we find that if $N$ satisfies the condition $N > 16^d \alpha^{-\frac{d(d+1)}{2}}$ then the second term is negligible and we obtain $$\Pi_d(1_A,...,1_A) \leq \frac{2}{2^{d}+1} \alpha^{\frac{d(d+1)}{2}} \left(\frac{N}{N'}\right)^d.$$
Note that the bound we chose for $N$ is not optimal, but it has the advantage of not being too complicated and does not change the final bound in Theorem~\ref{th1}.

Therefore the sum of the $2^{d(d+1)/2}-1$ other terms involving $f_A$ must have magnitude at least $$\left(\frac{1}{2^{d-1}} - \frac{2}{2^{d}+1} \right) \alpha^{\frac{d(d+1)}{2}} \left(\frac{N}{N'}\right)^d.$$

Since $N' \leq 3N$, one of those terms must have magnitude larger than $$\frac{1}{3^d \times 2^{\frac{d(d+1)}{2} +2d}} \alpha^{\frac{d(d+1)}{2}}.$$ To avoid heavy expressions, we will say that $\mid \Pi_d((g_{ij})_{1 \leq i \leq j \leq d}) \mid \geq \left(\frac{\alpha}{C}\right)^{\frac{d(d+1)}{2}}$ for some absolute constant $C>0$.
\end{proof}

Now we will use Lemma~\ref{lemma1} and Theorem~\ref{config} to establish the following corollary.

\begin{corollary}
\label{cor1}
Let $\alpha$, $0< \alpha < 1$, be a real number. Suppose that $N > 16^d \alpha^{-\frac{d(d+1)}{2}}$ and that $A$ is a subset of $[N]$ with $|A|=\alpha N$ containing fewer than $\frac{\alpha^{d(d+1)/2} N^d}{2^{d-1} + 1}$ non-trivial $d$-configurations.
Let $f_A: \Z / N' \Z \rightarrow \R$ be the balanced function of $A$.
Then $$\parallel f_A \parallel_{U^2} \geq \left(\frac{\alpha}{C}\right)^{\frac{d(d+1)}{2}}$$ for some constant $C>0$.
\end{corollary}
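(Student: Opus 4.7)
The plan is to deduce Corollary~\ref{cor1} as an immediate consequence of Lemma~\ref{lemma1} and Theorem~\ref{config}; no genuinely new input is needed, only a judicious choice of the index at which the minimum in Theorem~\ref{config} is taken.

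First I would invoke Lemma~\ref{lemma1} to obtain $1$-bounded real-valued functions $(g_{ij})_{1\leq i \leq j \leq d}$, at least one of which --- say $g_{i_0j_0}$ --- equals $f_A$, such that
\[
|\Pi_d((g_{ij}))| \geq \left(\frac{\alpha}{C}\right)^{d(d+1)/2}.
\]
Unpacking the definition, $\Pi_d((g_{ij}))$ is the sum of two averages of the form $\mathbb{E}_{n_1,\ldots,n_d \in \Z/N'\Z} \prod_{1 \leq i\leq j \leq d} g_{ij}(n_i+n_j+a)$ with $a=0$ and $a=1$. Since $0<\alpha<1$, both possible values of the $g_{ij}$ --- namely $f_A$ (which takes values in $[-\alpha,1-\alpha]$) and $\alpha 1_{[N]}$ --- are real-valued and $1$-bounded, so Theorem~\ref{config} applies to each of the two averages and bounds it by $\min_{1\leq i\leq j\leq d}\|g_{ij}\|_{U^2}$. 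The crucial step is to exploit the freedom in that minimum: choosing the index $(i_0,j_0)$ where $g_{i_0j_0}=f_A$ bounds it by $\|f_A\|_{U^2}$, so $|\Pi_d((g_{ij}))| \leq 2\|f_A\|_{U^2}$. Combining with the lower bound from Lemma~\ref{lemma1} then gives
\[
\|f_A\|_{U^2} \;\geq\; \tfrac{1}{2}\left(\frac{\alpha}{C}\right)^{d(d+1)/2} \;\geq\; \left(\frac{\alpha}{C'}\right)^{d(d+1)/2}
\]
for an enlarged absolute constant $C'$ absorbing the factor $2$.

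The argument has no genuine obstacle: it is a mechanical combination of the two preceding results. The only small points I would double-check are that the $1$-boundedness hypothesis of Theorem~\ref{config} really does hold (immediate from $0<\alpha<1$, which forces $|f_A|\leq 1$ and $|\alpha 1_{[N]}|\leq 1$), and that the constant shift $a\in\{0,1\}$ inside each summand of $\Pi_d$ is compatible with Theorem~\ref{config}, which is stated precisely to allow for such a shift without affecting the $U^2$ bound.
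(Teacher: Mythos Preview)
Your proposal is correct and follows essentially the same approach as the paper: both invoke Lemma~\ref{lemma1} to obtain the lower bound $|\Pi_d((g_{ij}))|\geq(\alpha/C)^{d(d+1)/2}$, split $\Pi_d$ into its two summands and apply Theorem~\ref{config} to each, select the index where $g_{ij}=f_A$ in the resulting minimum, and absorb the factor $2$ into the constant. Your write-up is slightly more explicit about verifying the $1$-boundedness hypothesis and the role of the shift $a$, but the argument is the same.
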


\begin{proof}
Let $\Pi_d((g_{ij})_{1 \leq i \leq j \leq d})$ be the same term as in Lemma~\ref{lemma1}.

By Theorem~\ref{config}, we have
\begin{equation*}
\begin{aligned}
\mid \Pi_d((g_{ij})_{1 \leq i \leq j \leq d}) \mid &= \mid \mathbb{E}_{n_1,...,n_d \in \Z / N'\Z} \prod_{1 \leq i \leq j \leq d} g_{ij}(n_i + n_j) + \mathbb{E}_{n_1,...,n_d \in \Z / N'\Z} \prod_{1 \leq i \leq j \leq d} g_{ij}(n_i + n_j+1) \mid
\\&\leq \mid \mathbb{E}_{n_1,...,n_d \in \Z / N'\Z} \prod_{1 \leq i \leq j \leq d} g_{ij}(n_i + n_j) \mid + \mid \mathbb{E}_{n_1,...,n_d \in \Z / N'\Z} \prod_{1 \leq i \leq j \leq d} g_{ij}(n_i + n_j+1) \mid
\\&\leq 2\min_{1\leq i \leq j \leq d} \| g_{ij} \| _{U^{2}} \leq 2\parallel f_A \parallel_{U^{2}}.
\end{aligned}
\end{equation*}

And by Lemma~\ref{lemma1}, we have $$\left|\Pi_d((g_{ij})_{1 \leq i \leq j \leq d})\right| \geq \left(\frac{\alpha}{C}\right)^{\frac{d(d+1)}{2}}$$ for some constant $C>0$.

Therefore $$2\parallel f_A \parallel_{U^{2}} \geq |\Pi_d((g_{ij})_{1 \leq i \leq j \leq d})| \geq \left(\frac{\alpha}{C}\right)^{\frac{d(d+1)}{2}}.$$
\end{proof}

\subsection{Inverse results for the Gowers $U^2$-norm}
In this subsection, we use the fact that $f_A$ has a large Gowers $U^2$-norm to show that it also has a large Fourier coefficient.

Let $f$ be a function from $\Z/N\Z$ to $\C$. The \emph{Fourier transform} $\hat{f}$ of $f$ is defined, for all $\xi \in \Z/NZ$, by the formula:
\begin{equation*}
\hat{f}(\xi) := \mathbb{E}_{x \in \Z/N\Z} f(x) e(-\xi x/N).
\end{equation*}

Let us recall the following theorem proved in~\cite{Green}.

\begin{theorem}
\label{th2}
Suppose that $f: \Z / N' \Z \rightarrow \C$ is a $1$-bounded function with $\parallel f \parallel_{U^{2}} \geq \delta$.
Then there is some $r \in \Z/N'\Z$ such that $$| \hat{f}(r)| \geq \delta^2.$$
\end{theorem}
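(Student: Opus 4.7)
The plan is to reduce Theorem~\ref{th2} to the Parseval-type identity
$$\|f\|_{U^2}^4 = \sum_{r \in \Z/N'\Z} |\hat{f}(r)|^4,$$
and then extract a large Fourier coefficient from $\sum_r |\hat{f}(r)|^4 \geq \delta^4$ by an $\ell^2$--$\ell^\infty$ interpolation argument, exploiting the assumption $|f|\leq 1$.

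First I would prove the identity. Expanding each of the four factors $f(x)$, $\overline{f(x+h_1)}$, $\overline{f(x+h_2)}$, $f(x+h_1+h_2)$ by Fourier inversion (writing $f(y)=\sum_r \hat{f}(r)\,e(ry/N')$, and taking complex conjugates where appropriate), the product inside the $U^2$-expectation becomes a quadruple sum over $(r_1,r_2,r_3,r_4)$ of $\hat{f}(r_1)\overline{\hat{f}(r_2)}\,\overline{\hat{f}(r_3)}\,\hat{f}(r_4)$ multiplied by the exponential with phase
$$\bigl((r_1-r_2-r_3+r_4)x+(r_4-r_2)h_1+(r_4-r_3)h_2\bigr)/N'.$$
Averaging over $x,h_1,h_2 \in \Z/N'\Z$ kills every term unless the three coefficients all vanish modulo $N'$, which forces $r_2=r_3=r_4$ and hence $r_1=r_4$. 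Only the diagonal $r_1=r_2=r_3=r_4=r$ survives, yielding $\sum_r |\hat{f}(r)|^4$.

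With the identity in hand, the rest is routine. By Parseval (obtained in the same manner by expanding $\mathbb{E}_x f(x)\overline{f(x)}$), one has
$$\sum_{r\in\Z/N'\Z} |\hat{f}(r)|^2 \;=\; \mathbb{E}_x |f(x)|^2 \;\leq\; 1$$
since $|f(x)|\leq 1$. Combining everything,
$$\delta^4 \;\leq\; \|f\|_{U^2}^4 \;=\; \sum_r |\hat{f}(r)|^4 \;\leq\; \Bigl(\max_r |\hat{f}(r)|^2\Bigr)\sum_r |\hat{f}(r)|^2 \;\leq\; \max_r |\hat{f}(r)|^2,$$
so some $r$ satisfies $|\hat{f}(r)|\geq \delta^2$, as required. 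There is no substantive obstacle here; the only place where care is needed is the bookkeeping of conjugates and signs in the Fourier expansion of the first step, which is a short but slightly fiddly calculation.
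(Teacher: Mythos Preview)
Your argument is correct and is exactly the standard proof: the identity $\|f\|_{U^2}^4=\sum_r|\hat f(r)|^4$ together with Parseval and the $1$-boundedness of $f$ immediately gives $\max_r|\hat f(r)|^2\geq \delta^4$. The paper does not supply its own proof of this theorem at all --- it simply recalls the statement from Green's notes~\cite{Green} --- and what you have written is precisely the argument one finds there (and in Tao--Vu), so there is nothing to compare.
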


By Corollary~\ref{cor1} and Theorem~\ref{th2}, we have the following result.
\begin{proposition}
\label{large}
Let $\alpha$, $0< \alpha < 1$, be a real number. Suppose that $N > 16^d \alpha^{-\frac{d(d+1)}{2}}$ and that $A$ is a subset of $[N]$ with $|A|=\alpha N$ containing fewer than $\frac{\alpha^{d(d+1)/2} N^d}{2^{d-1} + 1}$ non-trivial $d$-configurations.
Let $f_A: \Z / N' \Z \rightarrow \R$ be the balanced function of $A$.
Then there is some $r \in \Z/N'\Z$ and some constant $C>0$ such that $$| \hat{f}_A(r) | \geq \left(\frac{\alpha}{C}\right)^{d(d+1)}.$$
\end{proposition}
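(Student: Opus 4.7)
The plan is simply to chain together the two results immediately preceding the statement. The hypotheses on $\alpha$, $N$ and $A$ are exactly those of Corollary~\ref{cor1}, so I would first invoke that corollary to obtain a constant $C_1 > 0$ with
\begin{equation*}
\|f_A\|_{U^2} \geq \left(\frac{\alpha}{C_1}\right)^{d(d+1)/2}.
\end{equation*}

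Next, I would apply the inverse theorem for the $U^2$-norm (Theorem~\ref{th2}) to $f_A$ viewed as a function on $\Z/N'\Z$. For this I need $f_A$ to be $1$-bounded, which is immediate from its definition: $f_A = 1_A - \alpha 1_{[N]}$ takes the value $1-\alpha$ on $A$, $-\alpha$ on $[N]\setminus A$ and $0$ on $\Z/N'\Z \setminus [N]$, and all three values lie in $[-1,1]$ since $0 < \alpha < 1$. Setting $\delta := (\alpha/C_1)^{d(d+1)/2}$, Theorem~\ref{th2} then furnishes some $r \in \Z/N'\Z$ with $|\hat{f}_A(r)| \geq \delta^2 = (\alpha/C_1)^{d(d+1)}$. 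Renaming $C_1$ as the absolute constant $C$ of the statement gives the claimed bound.

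There is really no substantive obstacle here: the proposition is a pure bookkeeping step that glues the density-increment-style lower bound on $\|f_A\|_{U^2}$ from Corollary~\ref{cor1} to the fact, recorded in Theorem~\ref{th2}, that a large $U^2$-norm of a $1$-bounded function forces a large Fourier coefficient. The only place where one must be a little careful is the squaring in Theorem~\ref{th2}, which is exactly what converts the exponent $d(d+1)/2$ appearing in the $U^2$-bound into the exponent $d(d+1)$ appearing in the Fourier bound stated in the proposition.
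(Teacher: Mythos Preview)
Your proposal is correct and follows exactly the route the paper takes: the proposition is stated there as an immediate consequence of Corollary~\ref{cor1} combined with Theorem~\ref{th2}, and your write-up spells out precisely this chaining (including the $1$-boundedness check and the squaring of the exponent).
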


\subsection{Obtaining a density increment}
In this subsection, we will use the fact that $f_A$ has a large Fourier coefficient to find a density increment on some arithmetic progression and thus complete the proof of Proposition~\ref{keyprop}.

Since $f$ is supported on $[N]$ and $N'=2N+1$, Proposition~\ref{large} immediately implies the next proposition.

\begin{proposition}
\label{prop2}
Let $\alpha$, $0< \alpha < 1$, be a real number. Suppose that $N > 16^d \alpha^{-\frac{d(d+1)}{2}}$ and that $A$ is a subset of $[N]$ with $|A|=\alpha N$ containing fewer than $\frac{\alpha^{d(d+1)/2} N^d}{2^{d-1} + 1}$ non-trivial $d$-configurations.
Let $f_A$ be the balanced function of $A$, considered now as a function on $[N]$.
Then there is some $\theta \in [0,1]$ and some constant $C>0$ such that
\begin{equation}
\label{eq1}
\left|\sum_{x \in [N]} f_A(x) e(\theta x) \right| \geq \left(\frac{\alpha}{C}\right)^{d(d+1)} N.
\end{equation}
\end{proposition}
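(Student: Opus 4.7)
The plan is essentially to unwrap Proposition~\ref{large} and translate the statement about a Fourier coefficient on $\Z/N'\Z$ into the desired exponential sum on $[N]$. Since this paper defines the Fourier transform with an expectation normalisation, i.e., $\hat{f}(\xi) = (1/N') \sum_x f(x) e(-\xi x / N')$, and since the balanced function $f_A = 1_A - \alpha 1_{[N]}$ is supported on $[N]$ when the latter is viewed as a subset of $\Z/N'\Z$, the sum defining $\hat{f}_A(r)$ automatically collapses from $\Z/N'\Z$ to $[N]$.

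Concretely, I would let $r \in \Z/N'\Z$ be the frequency produced by Proposition~\ref{large} and pick $\theta \in [0,1]$ so that $e(\theta x) = e(-rx/N')$ for every integer $x$; the natural choice is to take $\theta$ to be $-r/N'$ reduced modulo $1$. Then
\begin{equation*}
\left|\sum_{x \in [N]} f_A(x) e(\theta x)\right| = \left|\sum_{x \in \Z/N'\Z} f_A(x) e(-rx/N')\right| = N'\, |\hat{f}_A(r)| \geq N' \left(\frac{\alpha}{C}\right)^{d(d+1)} \geq N \left(\frac{\alpha}{C}\right)^{d(d+1)},
\end{equation*}
which is exactly inequality~(\ref{eq1}).

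There is no genuine obstacle here; the proposition is purely a change of viewpoint, turning the normalised (expectation) Fourier coefficient supplied by Proposition~\ref{large} into an unnormalised exponential sum. The only mild subtlety worth verifying is that one can legitimately identify $[N] = \{1,\dots,N\}$ with a subset of $\Z/N'\Z$ without collisions, so that $f_A$ really does have support contained in this image and the two sums in the display above coincide. This is guaranteed by the choice $N' = 2N+1 > N$ made at the start of Section~2, which ensures that the reduction map $[N] \hookrightarrow \Z/N'\Z$ is injective. Once this is noted, the bound on $N$ required in Proposition~\ref{prop2} is inherited directly from Proposition~\ref{large}, so there is nothing further to check.
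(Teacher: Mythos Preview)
Your proposal is correct and is precisely the argument the paper has in mind: the paper's entire proof is the single remark that ``since $f$ is supported on $[N]$ and $N'=2N+1$, Proposition~\ref{large} immediately implies the next proposition,'' and you have simply unpacked this, choosing $\theta \equiv -r/N' \pmod 1$ and using $N' > N$ to pass from the normalised Fourier coefficient to the unnormalised exponential sum on $[N]$.
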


Let us recall a lemma proved in~\cite{TaoVu}, adapted from Roth's original argument~\cite{Roth}.

\begin{lemma}
\label{roth_tao}
Let $f: \Z \rightarrow \R$ be a function supported on $[N]$ such that $|f(n)| \leq 1$ for all $n$, $\sum_n f(n) =0$ and $$\left|\mathbb{E}_{x \in [N]} f(x) e(\theta x)\right| \geq \sigma$$ for some $\theta \in [0,1]$ and $\sigma > 0$.
Then there exists a non-trivial arithmetic progression $P \subseteq [N]$ with $|P| \geq c \sigma^2 \sqrt{N}$ and $$\mathbb{E}_{x \in P} f(x) \geq \frac{\sigma}{4}.$$
\end{lemma}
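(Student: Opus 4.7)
The plan is to apply the classical Roth granularisation argument: partition $[N]$ into short arithmetic progressions on which the character $x \mapsto e(\theta x)$ is essentially constant, convert the large Fourier coefficient into a large $\ell^1$-sum of partial sums of $f$, and then use the hypothesis $\sum_n f(n) = 0$ together with a pigeonhole argument to locate an arithmetic progression on which $f$ has positive mean $\geq \sigma/4$.

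To start, apply Dirichlet's approximation theorem to produce integers $p$ and $q$ with $1 \leq q \leq \sqrt{N}$ and $\|q\theta\|_{\R/\Z} \leq 1/\sqrt{N}$. Set $K := \lfloor c_0 \sigma \sqrt{N} \rfloor$ for a small absolute constant $c_0 > 0$ to be fixed below. For each residue class $r$ modulo $q$, partition $\{x \in [N] : x \equiv r \pmod{q}\}$ into arithmetic progressions $P_j$ of common difference $q$ and lengths $\ell_j \in [K, 2K]$, possibly discarding a terminal remainder of length less than $K$; the total number of discarded points is bounded by $qK \leq c_0 \sigma N$. For any $x, y$ in the same $P_j$ one has $y - x = mq$ with $|m| \leq 2K$, hence $|e(\theta y) - e(\theta x)| \leq 2\pi \cdot 2K \|q\theta\|_{\R/\Z} \leq 4\pi c_0 \sigma$.

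Fix representatives $x_j \in P_j$ and set $c_j := e(\theta x_j)$ and $S_j := \sum_{x \in P_j} f(x)$. Near-constancy of $e(\theta\,\cdot)$ on $P_j$ gives $|\sum_{x \in P_j} f(x) e(\theta x) - c_j S_j| \leq 4\pi c_0 \sigma \cdot \ell_j$. Summing over $j$, using $|c_j| = 1$ together with the hypothesis, and accounting for the discarded points (whose contribution to any sum is bounded by $c_0 \sigma N$ since $|f| \leq 1$), I obtain
\begin{equation*}
\sum_j |S_j| \;\geq\; \left|\sum_{x \in [N]} f(x) e(\theta x)\right| - O(c_0) \cdot \sigma N \;\geq\; \frac{\sigma N}{2}
\end{equation*}
once $c_0$ is sufficiently small. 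The hypothesis $\sum_n f(n) = 0$, combined with the same discarded-points estimate, gives $|\sum_j S_j| \leq c_0 \sigma N$, so splitting into positive and negative parts yields $\sum_j S_j^+ \geq \sigma N/4$ after further shrinking $c_0$. Since $\sum_j \ell_j \leq N$, pigeonholing produces some $j^\ast$ with $S_{j^\ast}/\ell_{j^\ast} \geq \sigma/4$, i.e.\ $\mathbb{E}_{x \in P_{j^\ast}} f(x) \geq \sigma/4$; and $P_{j^\ast}$ has length at least $K \geq (c_0/2)\sigma \sqrt{N} \geq c\sigma^2\sqrt{N}$ for a suitable absolute constant $c$, using that $\sigma \leq 1$.

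The only genuine subtlety is bookkeeping: one must choose $c_0$ small enough that the approximation error on each $P_j$ and the contribution of the discarded terminal points together remain well below $\sigma N/2$, so that the final $\sigma/4$ density increment survives. All the error terms are proportional to $c_0$, so a single small absolute constant works for both the approximation step and the pigeonhole step, and the resulting length bound $|P_{j^\ast}| \gtrsim \sigma \sqrt{N}$ comfortably implies the stated $c\sigma^2\sqrt{N}$.
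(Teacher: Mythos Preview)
Your proof is correct and is precisely the standard Dirichlet--granularisation argument of Roth (as presented in Tao--Vu); the paper does not give its own proof of this lemma but simply quotes it from \cite{TaoVu}, so there is nothing further to compare. One cosmetic remark: when you write $\sum_j |S_j| \geq \sigma N/2$ and then ``further shrink $c_0$'' to get $\sum_j S_j^{+} \geq \sigma N/4$, it would be cleaner to keep the first bound as $\sum_j |S_j| \geq (1 - O(c_0))\sigma N$ so that the subsequent halving genuinely lands at $\sigma N/4$; as written the arithmetic is slightly off, though the intent is clear and the argument is sound.
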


We have $ -\alpha \leq f_A (n) \leq 1- \alpha$ for all $n$, so $|f_A(n)| \leq 1$ for all $n$. We also have $\sum_n f_A(n) =0$. Therefore, using the conclusion of Proposition~\ref{prop2}, we can apply Lemma~\ref{roth_tao} with $f = f_A$ and $\sigma = \left(\frac{\alpha}{C}\right)^{d(d+1)}$. We obtain an arithmetic progression $P \subseteq [N]$ of length $|P| \geq \left(\frac{\alpha}{C}\right)^{2d(d+1)} \sqrt{N}$ and $$\mathbb{E}_{x \in P} f_A(x) \geq \frac{1}{4} \left(\frac{\alpha}{C}\right)^{d(d+1)} \geq \left(\frac{\alpha}{C'}\right)^{d(d+1)},$$
which means that $$\frac{|A \cap P|}{|P|} \geq \alpha + \left(\frac{\alpha}{C'}\right)^{d(d+1)}.$$

We obtain the following proposition.
\begin{proposition}
\label{keyprop}
Suppose that $0 < \alpha < 1$ and $N > 16^d \alpha^{-\frac{d(d+1)}{2}}$. Suppose that $P \subseteq \Z$ is an arithmetic progression of length $N$ and that $A \subseteq P$ is a set with cardinality $\alpha N$. Then one of the following two alternatives holds:
\begin{itemize}
\item $A$ contains at least $\frac{\alpha^{d(d+1)/2} N^d}{2^{d-1} +1}$ $d$-configurations;
\item There is an arithmetic progression $P'$ of length $\geq \left(\frac{\alpha}{C}\right)^{2d(d+1)} N^{1/2}$ such that, writing $A':= A \cap P'$ and $\alpha' := |A'|/|P'|$, we have $\alpha' > \alpha + \left(\frac{\alpha}{C}\right)^{d(d+1)}$ for some absolute constant $C>0$.
\end{itemize}
\end{proposition}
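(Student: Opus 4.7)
The plan is to assemble the ingredients already developed in this subsection into a single statement. First, I would handle the reduction from a general arithmetic progression $P$ to $[N]$: as noted at the beginning of Section~2.2, an affine rescaling of $P$ preserves both the density $\alpha$ and the number of $d$-configurations, and an AP $P'$ found inside $[N]$ pulls back to an AP inside the original $P$. So without loss of generality assume $P = [N]$.

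Next, I would argue by contradiction with the first alternative: suppose $A$ contains fewer than $\alpha^{d(d+1)/2} N^d / (2^{d-1}+1)$ non-trivial $d$-configurations. Combined with the hypothesis $N > 16^d \alpha^{-d(d+1)/2}$, this is exactly the hypothesis of Proposition~\ref{prop2}, so there exist $\theta \in [0,1]$ and an absolute constant $C>0$ with
\begin{equation*}
\left| \sum_{x \in [N]} f_A(x) e(\theta x) \right| \geq \left( \frac{\alpha}{C} \right)^{d(d+1)} N,
\end{equation*}
where $f_A = 1_A - \alpha 1_{[N]}$ is the balanced function of $A$.

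Finally I would feed this into Lemma~\ref{roth_tao} with $\sigma = (\alpha/C)^{d(d+1)}$: this is legitimate because $f_A$ is supported on $[N]$, satisfies $|f_A(n)| \leq 1$ (since $-\alpha \leq f_A(n) \leq 1-\alpha$), and has zero sum. The lemma produces a non-trivial AP $P' \subseteq [N]$ of length $|P'| \geq c\sigma^2 \sqrt{N} \geq (\alpha/C')^{2d(d+1)} \sqrt{N}$ on which $\mathbb{E}_{x \in P'} f_A(x) \geq \sigma/4$. Unfolding the definition of $f_A$, this mean-increment translates directly to $|A\cap P'|/|P'| \geq \alpha + \sigma/4 \geq \alpha + (\alpha/C'')^{d(d+1)}$, which is the desired density increment.

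There is no substantive obstacle here, since all the real work is done in Lemma~\ref{lemma1}, Corollary~\ref{cor1}, Theorem~\ref{th2}, and Lemma~\ref{roth_tao}. The only care required is bookkeeping the absolute constants: the constant $C$ produced by Proposition~\ref{prop2}, the factor $1/4$ in Lemma~\ref{roth_tao}, and the factor $c$ in the length bound are all independent of $\alpha$ and $d$, so (using $\alpha < 1$) they can be absorbed into a single constant $C$ appearing in both bullet points of the proposition, matching the form stated.
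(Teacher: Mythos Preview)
Your proposal is correct and follows essentially the same route as the paper: reduce to $P=[N]$, assume the first alternative fails so that Proposition~\ref{prop2} applies, then feed the resulting Fourier bias into Lemma~\ref{roth_tao} with $\sigma=(\alpha/C)^{d(d+1)}$ to extract the progression $P'$ with the stated length and density increment. The bookkeeping of constants you describe is exactly what the paper does as well.
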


\subsection{The final bound}
Finally, using Proposition~\ref{keyprop}, we obtain the following quantitative version of Theorem~\ref{th1}:
\begin{theorem}
\label{th1_gowers}
There is an absolute constant $C$ such that any subset $A \subseteq [N]$ with cardinality at least $\frac{CN}{(\log{\log{N}})^{ \frac{1}{d(d+1)-1}} }$ contains a non-trivial $d$-configuration.
\end{theorem}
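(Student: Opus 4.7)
The plan is to iterate Proposition~\ref{keyprop} in standard density-increment fashion. Assume for contradiction that $A \subseteq [N]$ has density $\alpha$ and contains no non-trivial $d$-configuration. Setting $P_0 := [N]$, $A_0 := A$, $\alpha_0 := \alpha$, $N_0 := N$, at each step $k$ Proposition~\ref{keyprop} applied to $A_k \subseteq P_k$ forces the second (density-increment) alternative: the first alternative would yield a non-trivial $d$-configuration in $A_k$ (the number of trivial ones is $O_d((\alpha_k N_k)^{d-1})$, which is negligible compared to $\alpha_k^{d(d+1)/2} N_k^d /(2^{d-1}+1)$), and via the affine bijection between $P_k$ and $[N_k]$ this pulls back to a non-trivial $d$-configuration in $A$, a contradiction. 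We therefore obtain $P_{k+1} \subseteq P_k$ of length $N_{k+1} \geq (\alpha_k/C)^{2d(d+1)} N_k^{1/2}$ on which $A_{k+1} := A \cap P_{k+1}$ has density $\alpha_{k+1} \geq \alpha_k + (\alpha_k/C)^{d(d+1)}$, and may iterate as long as $N_k > 16^d \alpha_k^{-d(d+1)/2}$.

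Next I would estimate the total number of iterations $K$ before $\alpha_k$ would exceed $1$ (the ultimate contradiction). A dyadic accounting is natural: to move $\alpha_k$ from $2^j\alpha$ to $2^{j+1}\alpha$ requires at most $C'(2^j\alpha)^{1-d(d+1)}$ steps, since each step adds at least $(2^j\alpha/C)^{d(d+1)}$. Summing over $0 \leq j \leq \log_2(1/\alpha)$ yields a geometric series with ratio $2^{1-d(d+1)} \leq 1/2$ (using $d \geq 1$), dominated by its first term, so
\begin{equation*}
K \leq C'' \alpha^{-(d(d+1)-1)}.
\end{equation*}

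Finally I would verify that the length budget supports this many iterations. Taking logs of the length recurrence yields $\log N_{k+1} \geq \tfrac{1}{2}\log N_k - O(d^2 \log(1/\alpha))$, and unrolling gives $\log N_K \geq 2^{-K}\log N - O(K d^2 \log(1/\alpha))$. Provided $K \leq c\log\log N$ (the correction $K\log(1/\alpha)$ being lower order because $\log(1/\alpha) = O(\log\log\log N)$ in the relevant regime), the hypothesis $N_K > 16^d\alpha_K^{-d(d+1)/2}$ of Proposition~\ref{keyprop} remains valid throughout. Combining with the bound on $K$, this forces $\alpha^{-(d(d+1)-1)} \leq c\log\log N$, i.e., $\alpha \geq C/(\log\log N)^{1/(d(d+1)-1)}$, which is the claimed threshold. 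The main delicate point is precisely this balance: the $\sqrt{N}$ in Proposition~\ref{keyprop}'s length estimate is what produces the double logarithm in the final bound, and one must verify that the $\log(1/\alpha)$ error terms in the length recurrence are genuinely dominated by the $2^{-K}\log N$ main term during all $\alpha^{-(d(d+1)-1)}$ density-doubling phases.
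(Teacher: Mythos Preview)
Your proposal is correct and follows essentially the same density-increment iteration as the paper: iterate Proposition~\ref{keyprop}, bound the number of steps by $O(\alpha^{-(d(d+1)-1)})$, and observe that the length lower bound $N_k \gtrsim (\alpha/C)^{4d(d+1)} N^{(1/2)^k}$ forces the iteration to remain valid unless $\log\log N \ll \alpha^{-(d(d+1)-1)}$. Your dyadic accounting for the step count is in fact more explicit than the paper (which simply asserts the bound $(C'/\alpha)^{d(d+1)-1}$); one small expository slip is that your final sentence has the implication direction reversed---under the assumption of no $d$-configuration one deduces $\alpha^{-(d(d+1)-1)} \geq c\log\log N$ (not $\leq$), and the theorem follows by contraposition.
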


\begin{proof}
Set $P_0 := [N]$ and assume that we have a set $A \subseteq P_0$ with $|A|= \alpha N$ containing no non-trivial $d$-configuration. Then we attempt to use Proposition~\ref{keyprop} repeatedly to obtain a sequence $P_0, P_1, ...$ of progressions and sets $A_i := A \cap P_i$. The densities $\alpha_i := |A_i|/|P_i|$ will satisfy $\alpha_{i+1} > \alpha_i + \left(\frac{\alpha_i}{C}\right)^{d(d+1)}$ and the length of $P_i$ will be $$N_i \geq \left(\frac{\alpha_{i-1}}{C}\right)^{2d(d+1)} N_{i-1}^{1/2} \geq \left(\frac{\alpha}{C}\right)^{2d(d+1) \sum_{k=0}^{i-1} \left(\frac{1}{2}\right)^k} N^{(1/2)^i} \geq \left(\frac{\alpha}{C}\right)^{4d(d+1)} N^{(1/2)^i},$$ using the fact that $\alpha_i \geq \alpha$ and $\sum_{k=0}^{i-1} \left(\frac{1}{2}\right)^k \leq 2$ for all $i \geq 1$.

But this iteration cannot last too long, otherwise we would obtain a set $A_i$ with density more than $1$ over the arithmetic progression $P_i$ for some $i$, which is impossible. In particular there cannot be more than $\left(\frac{C'}{\alpha}\right)^{d(d+1)-1}$ steps in total. We conclude that the applications of Proposition~\ref{keyprop} must have been invalid, which means that the condition $N_i > 16^d \alpha_i^{-\frac{d(d+1)}{2}}$ was violated. Since 
\begin{equation*}
N_i \geq \left(\frac{\alpha}{C}\right)^{4d(d+1)} N^{(1/2)^{\left(\frac{C'}{\alpha}\right)^{d(d+1)-1}}}
\end{equation*}
and $\alpha_i \geq \alpha$, we infer the bound
\begin{equation*}
16^d \alpha^{-\frac{d(d+1)}{2}} \geq \left(\frac{\alpha}{C}\right)^{4d(d+1)} N^{(1/2)^{\left(\frac{C'}{\alpha}\right)^{d(d+1)-1}}}.
\end{equation*}
Rearranging leads to
\begin{equation*}
\begin{aligned}
\log \log N &\leq \log\left(\log\left(16^d \alpha^{-\frac{d(d+1)}{2}}\right) + 4d(d+1) \log\left(\frac{C}{\alpha}\right)\right) + \log 2 \left(\frac{C'}{\alpha}\right)^{d(d+1)-1}
\\ &\leq \left(\frac{C''}{\alpha}\right)^{d(d+1)-1},
\end{aligned}
\end{equation*}
for some constant $C''$.

Therefore if $\alpha \geq \frac{C''}{(\log\log N)^{\frac{1}{d(d+1)-1}}}$ (ie. $N \geq \exp{\left(\exp{\left({\left(\frac{C''}{\alpha}\right)^{(d(d+1)-1)}}\right)}\right)}$), $A$ contains a non-trivial $d$-configuration. Theorem~\ref{th1_gowers} is proved.
\end{proof}

The bound obtained with this proof is better than the one using Szemer\'edi's theorem because we only have $3$ exponentials instead of $5$.

\section{The improvement in Sudakov, Szemer\'edi and Vu's theorem}
Now that we proved Theorem~\ref{th1_gowers}, we will use it to improve Sudakov, Szemer\'edi and Vu's theorem about sum-free subsets~\cite{Sudakov}.

Let $\phi(A)$ denote the maximum cardinality of a subset of $A$ which is sum-free with respect to $A$. Let $\phi(n)$ be the minimum of $\phi(A)$ over all sets $A$ of $n$ integers. Sudakov, Szemer\'edi and Vu obtained the first superlogarithmic lower bound for $\phi(n)$ by proving the following theorem~\cite{Sudakov}.

\begin{theorem}
\label{sudakovszemeredivu}
There is a function $g(n)$ tending to infinity with $n$ such that the following holds. Any set $A$ of $n$ integers contains a subset $B$ with cardinality $g(n) \log n$ such that $B$ is sum-free with respect to $A$.
\end{theorem}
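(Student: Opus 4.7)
The plan is to prove Theorem~\ref{sudakovszemeredivu} by combining a random sampling argument with a structural dichotomy driven by Theorem~\ref{th1_gowers}. The classical bound $\phi(n) \geq c \log n$ follows from a one-shot random sampling, so to cross the $\log n$ barrier one must exploit additive structure in $A$, and this is where the additional flexibility offered by $d$-configurations, compared with a single $3$-term progression, should buy the extra factor $g(n) \to \infty$.

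First, I would set up the random construction. Pick a random $B_0 \subseteq A$ by including each $a \in A$ independently with probability $p$; the expected size is $p|A|$ and the expected number of ``collision pairs'' $\{x,y\} \subseteq B_0$ with $x+y \in A$ is $p^2 E$, where $E$ counts ordered pairs $(x,y) \in A^2$ with $x+y \in A$. Deleting one endpoint of each collision yields a sum-free $B \subseteq A$ of size $\gtrsim p|A|$ provided $E$ is not much larger than $|A|$. When $E$ is not too large, this alone gives a superlogarithmic $B$ by optimizing $p$; the hard case is when $E$ is large, i.e.\ $A$ has many additive triples.

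Second, I would run the structural step using Theorem~\ref{th1_gowers}. Having many additive triples forces $A$, via a Pl\"unnecke-Ruzsa or Freiman-style reduction, to concentrate in an arithmetic progression: after rescaling, one may assume $A \subseteq [N]$ with density $\alpha = |A|/N$ not too small. Pick a slowly-growing parameter $d = d(n)$ and apply Theorem~\ref{th1_gowers}: as soon as $\alpha \geq C''/(\log\log N)^{1/(d(d+1)-1)}$, we extract a non-trivial $d$-configuration $\{n_i+n_j+a\}_{1 \leq i \leq j \leq d} \subseteq A$. Such a configuration is a much richer additive object than a single $3$-AP, since $\binom{d+1}{2}$ elements of $A$ arise from only $d$ underlying parameters. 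Feeding this configuration into the Sudakov-Szemer\'edi-Vu extraction in place of their use of an arithmetic progression, and optimizing the trade-off between $d$ and $\alpha$, produces the desired $g(n) \to \infty$.

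The main obstacle is the conversion from a $d$-configuration of \emph{sums} in $A$ into an actual sum-free subset \emph{of} $A$: the underlying $n_i$'s need not themselves lie in $A$, so the configuration must be transported through an intermediate step, typically by using it to locate a long arithmetic progression inside which a classical construction (Ruzsa's $2 \log_3 n - 1$ bound, or a Behrend-type argument) can be applied, and then iterating. Keeping track of the losses through this iteration, and balancing the tower-type bound in Theorem~\ref{th1_gowers} against the growth of $d$ and the loss in the random-sampling step, is what determines the explicit rate for $g(n)$ and would give the $(\log^{(3)} n)^{1/32772 - o(1)}$ improvement announced in the abstract.
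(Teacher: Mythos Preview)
Your proposal has the right instinct---use $d$-configurations in place of long arithmetic progressions inside the Sudakov--Szemer\'edi--Vu argument---but it misidentifies the key step and so does not actually connect to a proof.

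The paper does not re-prove Theorem~\ref{sudakovszemeredivu} from scratch: it is quoted from~\cite{Sudakov}, and the contribution here is to replace one ingredient. The architecture is the two-set statement Theorem~\ref{sudakov2} (sets $X,Y$ and a threshold function $F(h)$), from which Theorem~\ref{sudakovszemeredivu} is then derived; your random-sampling dichotomy on the number of additive triples in a single set $A$ is not the framework used, and nothing in your sketch would substitute for the SSV machinery that gets from ``$A$ has a subset with the midpoint property'' to an actual sum-free subset of the claimed size.

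More importantly, the ``main obstacle'' you flag is not an obstacle at all. You worry that the $n_i$ need not lie in $A$, but the \emph{diagonal} elements $2n_i+a$ of a $d$-configuration do lie in $A$, and they already have the needed property: $(2n_i+a)+(2n_j+a)=2(n_i+n_j+a)$ with $n_i+n_j+a\in A$. This is precisely Corollary~\ref{cor_meilleur}, and it is a drop-in replacement for Corollary~\ref{Vu}, which SSV had obtained from Szemer\'edi's theorem on $(2k-1)$-term progressions. No transport step, no intermediate long progression, no iteration is required here; the $d$-configuration hands you directly a $d$-element subset $A'\subseteq A$ with the midpoint-in-$A$ property. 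The whole proof is then: run the SSV argument verbatim, replacing each call to Corollary~\ref{Vu} by Corollary~\ref{cor_meilleur}. The improved (double-exponential) dependence of Theorem~\ref{th1_gowers} on $k$, versus the five-fold tower from Gowers' Szemer\'edi bound, is exactly what turns $F(h)$ into the smaller $G(h)$ and upgrades $g(n)$ from $(\log^{(5)}n)^{1-o(1)}$ to $(\log^{(3)}n)^{1/32772-o(1)}$. Your sketch neither locates this substitution point correctly nor supplies anything that would stand in for the rest of the SSV argument.
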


Their proof shows that we can take $g(n)$ to be of the order $(\log^{(5)}n)^{1-o(1)}$. Thus they proved that $$\phi(n) \geq \log n \left(\log^{(5)}n\right)^{1-o(1)}.$$

It is easier to describe $g(n)$ as the inverse of an iterative exponential function. They set $g(n)$ in Theorem~\ref{sudakovszemeredivu} to be $c(m/\log m)$, where $c$ is a sufficiently small positive constant, $m = F^{-1}(n^{1/2})$ and $$F(h) = \exp \left( h^{182} \times \left( 2 \uparrow \left(e^{h^{32770}} \right) \uparrow 2 \uparrow 2 \uparrow (2h +9) \right) \right).$$

We will show that, using Theorem~\ref{th1_gowers}, we can set $g(n)$ to be $c(m/\log m)$, where $m = G^{-1}(n^{1/2})$ and $$G(h) = \exp \left( h^{182} \times \left( e \uparrow \left(c e^{h^{32770}} \right) \uparrow (h(h+1)-1) \right) \right).$$

In their proof, Sudakov, Szemer\'edi and Vu deduce Theorem~\ref{sudakovszemeredivu} from the following theorem.

\begin{theorem}
\label{sudakov2}
Let $X$, $Y$ be two finite sets of positive integers with $\frac{1}{h^{29}} |Y| \geq |X| \geq F(h)$, where $F$ is the function described above and $h$ is a sufficiently large integer. Then $Y$ contains a subset $Z$ of size $h$ which is disjoint from $X$ and is sum-free with respect to $X \cup Y$.
\end{theorem}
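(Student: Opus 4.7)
The plan is to reproduce the proof of Sudakov, Szemer\'edi and Vu for Theorem~\ref{sudakov2} essentially verbatim, with only one substitution: at the single step where they invoke Szemer\'edi's theorem (Theorem~\ref{szeme}) to locate an arithmetic progression of length $2h-1$ inside a dense subset of an interval, we instead invoke Theorem~\ref{th1_gowers} to locate a non-trivial $h$-configuration. A progression of length $2h-1$ already contains an $h$-configuration on its odd-indexed elements (as observed below Theorem~\ref{szeme} in the introduction), so the portion of their argument that converts this arithmetic structure into a sum-free subset $Z \subseteq Y$ of cardinality $h$ uses only the $h$-configuration; consequently, the whole construction carries over once we have the configuration in hand. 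This yields the stated theorem with $F(h)$ replaced by the smaller quantity $G(h)$, which is a fortiori enough.

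In more detail, the steps are as follows. First, run SSV's reductions: from $|Y| \geq h^{29}|X|$ and $|X| \geq G(h)$ they produce a subset $A$ of some interval $[M]$ of density roughly $e^{-h^{32770}}$ (this is where the $e^{h^{32770}}$ factor inside $F$ and $G$ originates), together with a procedure that converts any non-trivial $h$-configuration of $A$ into the desired set $Z \subseteq Y$ that is disjoint from $X$ and sum-free with respect to $X \cup Y$. Second, apply Theorem~\ref{th1_gowers} with $d = h$ and $\alpha \approx e^{-h^{32770}}$: its threshold $M \geq \exp \exp ( (C/\alpha)^{h(h+1)-1} )$ is precisely what the definition of $G(h)$ provides after the outer SSV reduction, so a non-trivial $h$-configuration in $A$ exists and produces~$Z$.

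The principal obstacle is careful bookkeeping. One must open up SSV's proof, pinpoint the single application of Szemer\'edi's theorem, verify that the remaining steps of the argument need only the $h$-configuration structure (and not the full $(2h-1)$-term AP structure) of the object produced there, and propagate the improved double-exponential bound of Theorem~\ref{th1_gowers} through the outer layers of their argument. Comparing $F$ and $G$ directly, the inner tower $2 \uparrow 2 \uparrow \alpha^{-1} \uparrow 2 \uparrow 2 \uparrow (2h+9)$ coming from Szemer\'edi collapses to $e \uparrow e \uparrow \alpha^{-(h(h+1)-1)}$, and this is exactly the savings that takes $\log^{(5)} n$ in the Sudakov--Szemer\'edi--Vu bound down to $\log^{(3)} n$ in the improved bound for $\phi(n)$ announced in the abstract.
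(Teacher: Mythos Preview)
Your proposal is logically sound, but there is a mismatch with the paper that you should be aware of. Theorem~\ref{sudakov2} as stated (with the function $F$) is the \emph{original} Sudakov--Szemer\'edi--Vu result; the present paper does not prove it at all but simply quotes it from~\cite{Sudakov}. What the paper \emph{does} prove is the strengthened Theorem~\ref{sudakov2_mieux} (with $G$ in place of $F$), and its argument is exactly the one you describe: leave the SSV proof intact except for swapping Corollary~\ref{Vu} (which relies on Szemer\'edi's theorem) for Corollary~\ref{cor_meilleur} (which relies on Theorem~\ref{th1_gowers}), then check that the lower bound $|X|\geq G(h)$ suffices for the final numerical inequality on $m_1$.

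So your write-up is essentially the paper's proof of Theorem~\ref{sudakov2_mieux}, followed by the observation that $G(h)\leq F(h)$ gives Theorem~\ref{sudakov2} a fortiori. That is a correct but roundabout route to the stated theorem: if all you want is Theorem~\ref{sudakov2} with the weaker bound $F$, no substitution is needed---the original SSV argument with Szemer\'edi's theorem already does the job, which is why the paper simply cites it. Your outline is the right one for the \emph{improved} statement, and at the same level of detail the paper itself provides (it too declines to reproduce the beginning of the SSV proof).
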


We will show that a modification of the proof of Theorem~\ref{sudakov2} allows us to replace $F$ by $G$ and obtain the following theorem.

\begin{theorem}
\label{sudakov2_mieux}
Let $X$, $Y$ be two finite sets of positive integers with $\frac{1}{h^{29}} |Y| \geq |X| \geq G(h)$, where $G$ is the function described above and $h$ is a sufficiently large integer. Then $Y$ contains a subset $Z$ of size $h$ which is disjoint from $X$ and is sum-free with respect to $X \cup Y$.
\end{theorem}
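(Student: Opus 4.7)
The plan is to replay the Sudakov--Szemerédi--Vu proof of Theorem~\ref{sudakov2} essentially verbatim, changing only the single step where they invoke Szemerédi's theorem. Their strategy combines a probabilistic clean-up of $Y$ (removing those elements whose pairwise sums with the rest hit $X$ too often, which is what consumes the polynomial factor $h^{29}$ in the ratio $|Y|/|X|$) with a structural step that extracts a subset $Z \subseteq Y$ of size $h$ whose pairwise sums avoid $X \cup Y$. At the last stage, they apply Theorem~\ref{szeme} to locate, inside a subset of density of order $e^{-h^{32770}}$ of an interval $[N]$ with $N$ roughly $h^{-182}|X|$, an arithmetic progression of length $2h-1$, from which they read off an $h$-configuration by keeping only the odd-indexed terms. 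This single invocation of Szemerédi's theorem is the sole origin of the tower $2 \uparrow e^{h^{32770}} \uparrow 2 \uparrow 2 \uparrow (2h+9)$ in the definition of $F$.

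The substitution I would make is to call Theorem~\ref{th1_gowers} instead of Theorem~\ref{szeme} at that step. With $d=h$ and density $\alpha$ of order $e^{-h^{32770}}$, the required lower bound on $N$ coming from Theorem~\ref{th1_gowers} becomes $\exp\bigl(\exp\bigl((C''/\alpha)^{d(d+1)-1}\bigr)\bigr) = \exp\bigl(e \uparrow (c\,e^{h^{32770}}) \uparrow (h(h+1)-1)\bigr)$. Re-inserting the polynomial factor $h^{182}$ dictated by the unchanged portion of their proof produces exactly the function $G(h)$ from the statement. The exponents $29$, $182$, and $32770$ are preserved because they are controlled by steps (the probabilistic removal and the density of the extracted subset) that are entirely disjoint from the application of Szemerédi's theorem.

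All other parts of the Sudakov--Szemerédi--Vu argument --- the probabilistic deletion of bad elements of $Y$, the passage to a long arithmetic progression on which a suitable indicator has increased density, and the verification that the resulting $Z$ is sum-free with respect to $X \cup Y$ --- never use the arithmetic-progression structure of the extracted configuration. They only need the algebraic property that the configuration consists of values of the form $n_i + n_j + a$ with $n_i \neq n_j$ for $i \neq j$. Both Theorem~\ref{szeme} (via the odd-indexed terms of a $(2h-1)$-term AP) and Theorem~\ref{th1_gowers} (directly) deliver precisely such an object, so the substitution is clean.

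The one point that deserves care --- and the main obstacle I foresee --- is checking that the non-triviality condition $n_i \neq n_j$ furnished by Theorem~\ref{th1_gowers} is strong enough for the downstream argument, which in its original form enjoys the stronger information that the $n_i$ form an arithmetic progression. A detailed reading of their construction shows that this AP structure is never actually used: the downstream step only requires that the $h$ values $2n_i + a$ be distinct and that their pairwise sums $2n_i + 2n_j + 2a$ lie in the prescribed controlled set, both of which follow from the non-triviality condition alone. Consequently the replacement is legitimate, and only the quantitative tower $F \to G$ changes, yielding Theorem~\ref{sudakov2_mieux}.
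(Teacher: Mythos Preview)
Your proposal is correct and follows essentially the same approach as the paper: keep the Sudakov--Szemer\'edi--Vu argument intact and replace the single invocation of Szemer\'edi's theorem (packaged in the paper as Corollary~\ref{Vu}) by Theorem~\ref{th1_gowers} (packaged as Corollary~\ref{cor_meilleur}), so that the required lower bound on $m_1$ drops from the Szemer\'edi tower to $e \uparrow e \uparrow (Ce^{h^{32770}}) \uparrow (h(h+1)-1)$, and then the inequality $\log m_1 \geq (\log |Y|)/h^{182}$ from the unchanged part of their proof turns this into $|Y| \geq G(h)$. Your care about whether non-triviality of the $d$-configuration (as opposed to full AP structure) suffices downstream is well placed and matches exactly what Corollary~\ref{cor_meilleur} delivers; the paper itself simply asserts the substitution without spelling this out.
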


In particular, the proof of Theorem~\ref{sudakov2} uses the following corollary, proved using Szemer\'edi's theorem.

\begin{corollary}
\label{Vu}
If $A \subseteq \{1,...,N\}$ is a set of size $\alpha N$, and if $N > 2\uparrow 2 \uparrow (\alpha)^{-1} \uparrow 2 \uparrow 2 \uparrow (2k+9)$, then there is a subset $A' \subseteq A$ of $k$ elements such that, for any two elements $x,y \in A'$, there is an element $z$ of $A$ satisfying $x+y=2z$.
\end{corollary}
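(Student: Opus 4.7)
The strategy is the standard reduction of this ``midpoint-closure'' problem to the existence of a long arithmetic progression inside $A$. The key observation is that if $b_1 < b_2 < \dots < b_m$ is an AP with common difference $d$, then the subset of odd-indexed terms has a built-in midpoint property: $b_{2i-1} + b_{2j-1} = 2b_1 + (2i+2j-4)d = 2b_{i+j-1}$, so the arithmetic mean of any two such terms lies at an integer index of the same AP. Thus, if I can locate in $A$ an AP of length $2k-1$, then selecting the odd-indexed terms produces a subset $A' \subseteq A$ of size $k$ whose pairwise midpoints automatically lie in the AP, hence in $A$.

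To find such an AP, I would invoke Szemer\'edi's theorem (Theorem~\ref{szeme}) with the AP-length parameter equal to $2k-1$. The hypothesis it requires is $N \geq 2 \uparrow 2 \uparrow (\alpha)^{-1} \uparrow 2 \uparrow 2 \uparrow ((2k-1)+9) = 2 \uparrow 2 \uparrow (\alpha)^{-1} \uparrow 2 \uparrow 2 \uparrow (2k+8)$, which is weaker than the stated hypothesis $N > 2 \uparrow 2 \uparrow (\alpha)^{-1} \uparrow 2 \uparrow 2 \uparrow (2k+9)$, so the invocation is legitimate. It yields a non-trivial arithmetic progression $b_1 < b_2 < \dots < b_{2k-1}$ contained in $A$, with some common difference $d \geq 1$. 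Setting $A' := \{b_1, b_3, \dots, b_{2k-1}\}$, the identity above shows that for any $x = b_{2i-1}$ and $y = b_{2j-1}$ in $A'$ the element $z := b_{i+j-1}$ lies in $A$ and satisfies $x+y = 2z$; the index range $1 \leq i+j-1 \leq 2k-1$ is automatically respected for $i,j \in \{1,\dots,k\}$, and the diagonal case $x=y$ trivially reduces to $z = x \in A$.

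There is no substantive obstacle here: the entire content is the midpoint identity for every-other-term subsequences of an AP, together with the bookkeeping required to match the iterated-exponential bound provided by Szemer\'edi's theorem. The corollary is included only as the ingredient in~\cite{Sudakov} that will be replaced, in the modified proof of Theorem~\ref{sudakov2_mieux}, by an analogous statement deduced from Theorem~\ref{th1_gowers} in place of Theorem~\ref{szeme}.
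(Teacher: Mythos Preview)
Your argument is correct and is precisely the approach the paper has in mind. Although the paper does not spell out a proof of Corollary~\ref{Vu} (it merely records it as the ingredient from~\cite{Sudakov} obtained via Szemer\'edi's theorem), the same reduction is described explicitly in the introduction when deriving Theorem~\ref{th1} from Theorem~\ref{szeme}: locate an arithmetic progression of length $2k-1$ in $A$ and take the odd-indexed terms, which produces the required $k$-element subset and yields the bound $2 \uparrow 2 \uparrow (\alpha)^{-1} \uparrow 2 \uparrow 2 \uparrow (2k+8)$, a fortiori covered by the stated hypothesis with exponent $2k+9$.
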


But by using Theorem~\ref{th1_gowers}, we can replace Corollary~\ref{Vu} by the following corollary, which comes with a better bound.

\begin{corollary}
\label{cor_meilleur}
If $A \subseteq \{1,...,N\}$ is a set of size $\alpha N$, and if $N > e \uparrow e \uparrow \left( \frac{C}{\alpha}\right) \uparrow (k(k+1)-1)$, then there is a subset $A' \subseteq A$ of $k$ elements such that, for any two elements $x,y \in A'$, there is an element $z$ of $A$ satisfying $x+y=2z$.
\end{corollary}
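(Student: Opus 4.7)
The plan is to deduce Corollary~\ref{cor_meilleur} directly from the quantitative generalisation of Roth's theorem (Theorem~\ref{th1_gowers}) by taking $d=k$, and then to extract the ``diagonal'' of the resulting non-trivial $k$-configuration as the desired set $A'$. The hypothesis $N > e \uparrow e \uparrow (C/\alpha) \uparrow (k(k+1)-1)$ has been engineered to match precisely the density hypothesis of Theorem~\ref{th1_gowers}.

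First I would check that Theorem~\ref{th1_gowers} applies. Its density hypothesis, $|A| \geq CN/(\log\log N)^{1/(d(d+1)-1)}$ with $d=k$ and $|A|=\alpha N$, is equivalent, after rearrangement, to
\begin{equation*}
\log\log N \geq (C/\alpha)^{k(k+1)-1}, \qquad \text{i.e.} \qquad N \geq \exp\!\bigl(\exp((C/\alpha)^{k(k+1)-1})\bigr),
\end{equation*}
and the right-hand side is exactly $e \uparrow e \uparrow (C/\alpha) \uparrow (k(k+1)-1)$ in the notation of the paper, so the hypothesis holds (for the appropriate absolute constant $C$).

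Theorem~\ref{th1_gowers} then produces a non-trivial $k$-configuration $\{n_i + n_j + a : 1 \leq i \leq j \leq k\} \subseteq A$, in which, by non-triviality, the integers $n_1,\dots,n_k$ are pairwise distinct. I then set
\begin{equation*}
A' := \{2n_i + a : 1 \leq i \leq k\},
\end{equation*}
which is a subset of $A$ (these are the ``diagonal'' elements $i=j$ of the configuration) of cardinality exactly $k$ (the elements are distinct because the $n_i$ are). For any two elements $x = 2n_i + a$ and $y = 2n_j + a$ of $A'$, the integer $z := n_i + n_j + a$ is itself an element of the $k$-configuration and therefore lies in $A$, and by construction $x + y = 2z$. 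This gives exactly the conclusion sought.

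Since the proof reduces to a single application of Theorem~\ref{th1_gowers} followed by an essentially trivial extraction, no real obstacle is anticipated; all of the difficulty is already absorbed into the bound of Theorem~\ref{th1_gowers}. The only points requiring mild care are the bookkeeping that turns the double-exponential density condition into the stated form of $N$, and the observation that non-triviality of the $k$-configuration guarantees both that $|A'|=k$ and that the sought ``averages'' $z$ are themselves members of $A$.
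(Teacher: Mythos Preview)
Your proof is correct and follows essentially the same route as the paper: apply Theorem~\ref{th1_gowers} with $d=k$ to obtain a non-trivial $k$-configuration, take $A'=\{2n_i+a:1\le i\le k\}$, and use $z=n_i+n_j+a\in A$ for the averages. Your version is in fact slightly more careful, since you explicitly verify the translation between the density hypothesis of Theorem~\ref{th1_gowers} and the stated bound on $N$, and you note that non-triviality guarantees $|A'|=k$.
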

\begin{proof}
By Theorem~\ref{th1_gowers}, $A$ contains a non-trivial $k$-configuration $(n_i + n_j + a)_{1 \leq i \leq j \leq k}$. Now we can take $A'=\{2n_1 +a, ..., 2n_k+a\}$, and so for any two elements $2n_i+a$ and $2n_j +a \in A'$, $(2n_i+a) + (2n_j +a) = 2(n_i +n_j +a)$ with $n_i +n_j +a \in A$.
\end{proof}

To prove Theorem~\ref{sudakov2_mieux} we leave all Sudakov, Szemer\'edi and Vu's proof of Theorem~\ref{sudakov2} unchanged, except at the end, where we replace their bound by ours. We advise the reader to read the proof in~\cite{Sudakov}, because we won't copy the beginning of the proof here.
Thus when they need $m_1 \geq 2 \uparrow 2 \uparrow \left(e^{h^{32770}} \right) \uparrow 2 \uparrow 2 \uparrow (2h +9)$, we only need $m_1 \geq e \uparrow e \uparrow \left(C e^{h^{32770}}\right) \uparrow (h(h+1)-1)$.
As in their proof, we have $\log m_1 \geq \frac{\log |Y|}{h^{182}}.$ Then we only need to verify that $\log |Y| \geq h^{182} \times \left( e \uparrow \left(C e^{h^{32770}} \right) \uparrow (h(h+1)-1) \right).$ Since the right-hand side is equal to $\log G(h)$, this inequality follows from the assumption of Theorem~\ref{sudakov2_mieux} that $|Y| \geq |X| \geq G(h)$. This completes the proof.

Now we can derive Theorem~\ref{sudakovszemeredivu} from Theorem~\ref{sudakov2_mieux} in the same way as in~\cite{Sudakov}, except that we replace $F$ by $G$.

After some calculation, we obtain that we can take $g(n)$ to be of the order $(\log^{(3)}n)^{\frac{1}{32772} -o(1)}$.
Therefore we obtain the following stronger version of Theorem~\ref{sudakovszemeredivu}.

\begin{theorem}
There is a function $g(n)$ of the order $(\log^{(3)}n)^{\frac{1}{32772} -o(1)}$ such that any set $A$ of $n$ integers contains a subset $B$ with cardinality $g(n) \log n$ such that $B$ is sum-free with respect to $A$.
\end{theorem}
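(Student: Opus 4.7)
The plan is to invoke Theorem~\ref{sudakov2_mieux} as the quantitative input to Sudakov, Szemer\'edi and Vu's existing derivation of Theorem~\ref{sudakovszemeredivu}, treating the rest of that derivation as a black box, and then to compute the resulting $g(n)$ by inverting the improved function $G$.

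First, I would run the SSV reduction of~\cite{Sudakov} verbatim, except at the one step where the hypothesis $|X|\geq F(h)$ is invoked: there I use Theorem~\ref{sudakov2_mieux} instead, which requires only $|X|\geq G(h)$. That reduction packages the conclusion as $g(n)=c\,m/\log m$, where $m$ is the largest integer for which the hypothesis of the intermediate theorem can be satisfied when $|Y|$ is of order $n^{1/2}$; our improvement therefore gives $m = G^{-1}(n^{1/2})$ in place of SSV's $m = F^{-1}(n^{1/2})$.

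Next, I would unwind the tower defining
\begin{equation*}
G(h) = \exp\!\Bigl(h^{182} \cdot e \uparrow (c e^{h^{32770}}) \uparrow (h(h+1)-1)\Bigr)
\end{equation*}
to compute $G^{-1}(n^{1/2})$. Reading from the inside out: $(c e^{h^{32770}})^{h(h+1)-1}$ has logarithm $(h^2+h-1)(h^{32770}+\log c)=(1+o(1))h^{32772}$, so this quantity equals $e^{(1+o(1))h^{32772}}$; one further $e\uparrow$ turns it into $\exp(\exp((1+o(1))h^{32772}))$; and the outer $\exp(h^{182}\cdot\,\cdot\,)$ absorbs the polynomial prefactor into the top logarithm. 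The net effect is $\log^{(3)} G(h) = (1+o(1))\,h^{32772}$, and inverting gives $m = G^{-1}(n^{1/2}) = (1+o(1))\,(\log^{(3)} n)^{1/32772}$. Plugging into $g(n)=c\,m/\log m$ yields $g(n)=(\log^{(3)} n)^{1/32772-o(1)}$, the $-o(1)$ absorbing both the constant $c$ and the denominator $\log m \sim (1/32772)\log^{(4)} n = (\log^{(3)} n)^{o(1)}$. Note that, unlike in SSV's $F$ (where the analogous polynomial exponent $h^{32770}$ is dominated at its level by a double exponential tower and disappears into an $o(1)$), here the exponent $h^{32772}$ sits exposed at the base of only two further exponentials, so it survives inversion and produces the exponent $1/32772$ rather than $1$.

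The only real obstacle is the book-keeping: one must verify that the polynomial factors ($h^{182}$ outside, $h(h+1)-1$ inside, and $h^{32770}$, $\log c$ deeper still) contribute only $(1+o(1))$ multiplicative corrections at the appropriate levels of iterated logarithm, and in particular that the exponent $32772$ is exactly $32770 + 2$ --- the inner polynomial exponent inherited from Theorem~\ref{th1_gowers} plus the quadratic multiplier $h(h+1)-1\sim h^2$. The structural SSV reduction needs no modification; all the genuinely new content of the proof is this asymptotic inversion.
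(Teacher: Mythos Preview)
Your proposal is correct and follows exactly the paper's approach: run the Sudakov--Szemer\'edi--Vu derivation with Theorem~\ref{sudakov2_mieux} in place of their intermediate theorem, obtain $g(n)=c\,m/\log m$ with $m=G^{-1}(n^{1/2})$, and then invert $G$. The paper compresses the inversion into the phrase ``after some calculation,'' whereas you spell it out; your computation $\log^{(3)} G(h)=(1+o(1))h^{32772}$ and the absorption of $\log m$ into the $o(1)$ of the exponent are both correct and constitute exactly that omitted calculation.
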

%
%

\section{Conclusion}
We generalised Roth's theorem to $d$-configurations and showed that any set $A \subseteq \{1,...,N\}$ with density $\alpha$ such that $N > e \uparrow e \uparrow \left( \frac{C}{\alpha}\right) \uparrow (d(d+1)-1)$ contains a non-trivial $d$-configuration. Then we used this result to improve Sudakov, Szemeredi and Vu's theorem about sum-free subsets and proved that $\phi(n) \geq \log n \left(\log ^{(3)} n \right)^{1/32772 - o(1)}$, which is the best lower bound known to date for $\phi(n)$.

Bourgain~\cite{Bourgain} modified Roth's original Fourier analytic proof~\cite{Roth} of Roth's theorem by increasing the density of $A$ on Bohr sets instead of arithmetic progressions. By doing so, he improved Roth's bound $N \geq \exp\left(\exp\left(\frac{C}{\alpha}\right)\right)$ and showed that $N \geq \left(\frac{C}{\alpha}\right)^{C'/\alpha^2}$ suffices. Therefore it should be possible to do a similar modification to our proof in order to obtain a stronger version of Theorem~\ref{th1} with a bound of the type $N \geq \left(\frac{c(d)}{\alpha}\right)^{\left(\frac{c'(d)}{\alpha}\right)^{d(d-1)}}.$ This would hopefully improve our result about sum-free subsets and lead to a bound of the form $\phi(n) \geq \log n (\log^{(2)}n)^{c -o(1)}.$
Even if the technical details of such a proof might be considerable, it would constitute an interesting subject for further research. However this lower bound is still far from the best upper bound currently known, $\phi(n) \leq O (e ^{ \sqrt{\log n}})$, proved by Ruzsa in~\cite{Ruzsa}, so we can assume that many interesting results about sum-free subsets are still to be found.

\section*{Acknowledgements}
The author would like to thank Ben Green for introducing her to this very interesting subject and for the precious advices he gave her during the elaboration of this work.
She also thanks Fernando Shao for drawing her attention to a calculation mistake in an early version of this article.

\bibliographystyle{siam}
\bibliography{biblio2}

\end{document}